\documentclass[11pt]{amsart}
\usepackage{amssymb, latexsym}
\theoremstyle{plain}
\newtheorem{theorem}{Theorem}

\newtheorem {lemma}{Lemma}
\newtheorem{proposition}{Proposition}

\theoremstyle{remark}

\newtheorem*{Remark 1}{Remark 1}
\newtheorem*{Remark 2}{Remark 2}
\newtheorem*{Remark 3}{Remark 3}
\newtheorem*{Remark 4}{Remark 4}

\numberwithin{equation}{section}

\begin{document}

\title[Asymptotics for Diffusion Processes  with Random Jumps]%
 {Asymptotics for  Exit Problem and Principal Eigenvalue  for a
 Class of Non-Local Elliptic Operators   Related to Diffusion Processes with Random Jumps and Vanishing Diffusion}

\author{ Ross G. Pinsky}
\address{Department of Mathematics\\
Technion---Israel Institute of Technology\\
Haifa, 32000\\ Israel}
\urladdr{http://www.math.technion.ac.il/~pinsky/}
\thanks{This  research   was supported by
 THE ISRAEL SCIENCE FOUNDATION (grant No.
449/07)}

\subjclass[2000]{60J60, 35J25,  60J75, 35P15} \keywords{non-local differential operator, diffusion process,
principal eigenvalue, exit problem, random
space-dependent jumps}
\date{}

\begin{abstract}
Let $D\subset R^d$ be a bounded domain
and denote by $\mathcal P(D)$  the space of probability measures on $D$.
Let
\begin{equation*}
L=\frac12\nabla\cdot a\nabla +b\nabla
\end{equation*}
be a second order elliptic operator.
Let $\mu\in\mathcal P(D)$ and   $\delta>0$.
 Consider a Markov process $X(t)$ in $D$ which performs diffusion in $D$ generated by the operator $\delta L$  and is stopped at the
boundary, and which while running, jumps instantaneously,  according to an exponential clock
with spatially dependent intensity $V>0$, to a new point,  according to the distribution $\mu$.
The Markov process is generated by the operator $L_{\delta,\mu, V}$
defined by
\begin{equation*}
L_{\delta,\mu, V}\phi\equiv \delta L \phi+V(\int_D\phi~d\mu-\phi).
\end{equation*}
Let $\phi_{\delta,\mu,V}$ denote the solution to the Dirichlet problem
\begin{equation*}\label{Dirprob}
\begin{aligned}
&L_{\delta,\mu,V}\phi=0\ \text{in}\ D;\\
&\phi=f\ \text{on}\ \partial D,
\end{aligned}
\end{equation*}
where $f$ is  continuous.
The solution has
the stochastic
representation
\begin{equation*}
\phi_{\delta,\mu,V}(x)=E_xf(X(\tau_D)).
\end{equation*}
One has that $\phi_{0,\mu,V}(f)\equiv\lim_{\delta\to0}\phi_{\delta,\mu,V}(x)$ is independent of $x\in D$.
We evaluate this constant in the case that $\mu$ has a density in a neighborhood
of $\partial D$.
We also study the asymptotic behavior as $\delta\to0$
of the principal eigenvalue $\lambda_0(\delta,\mu,V)$ for the operator
$L_{\delta,\mu, V}$, which generalizes previously obtained results for the case $L=\frac12 \Delta$.
\end{abstract}

\maketitle
\section{Introduction and Statement of Results}

Let $D\subset R^d$ be a bounded domain with $C^{2,\alpha}$-boundary ($\alpha\in (0,1]$) and let $\mathcal P(D)$ denote the space of probability measures on $D$.
Let
\begin{equation}
L=\frac12\nabla\cdot a\nabla +b\nabla
\end{equation}
be a second order elliptic operator. Assume that the coefficients   $a=\{a_{i,j}\}_{i,j=1}^n$
and $b=\{b_i\}_{i=1}^n$ are  in $C^{1,\alpha}(\bar D)$ and that $a(x)$ is positive definite for each $x\in\bar D$.
Fix a measure $\mu\in\mathcal P(D)$ and fix  $\delta>0$.
 Consider a Markov process $X(t)$ in $D$ which performs diffusion in $D$ generated by the operator $\delta L$  and is stopped at the
boundary, and which while running, jumps instantaneously,  according to an   exponential clock
with spatially dependent intensity $V$, to a new point,  according to the distribution $\mu$.
That is, the probability that the process $X(\cdot)$ has not jumped by time $t$ is given by $\exp(-\int_0^{t\wedge\tau_D} V(X(s))ds)$,
where $\tau_D=\inf\{t\ge 0: X(t)\not\in D\}$ is the first exit time from $D$.
 From its new position after the jump,
the process repeats the above behavior independently of what has
transpired previously. We assume that $V$ is positive and in $C^\alpha(\bar D)$.
 Denote
probabilities and expectations for the process
starting from $x\in D$ by $P_x^{\delta,\mu,V}$ and
$E_x^{\delta,\mu,V}$.

Let $L_{\delta,\mu,V}$ denote the operator defined by
\begin{equation*}
L_{\delta,\mu, V}\phi\equiv \delta L \phi+V(\int_D\phi~d\mu-\phi).
\end{equation*}
The operator $L_{\delta,\mu, V}$ generates the Markov process $X(t)$, and consequently,
$\phi(X(t\wedge\tau_D))-\int_0^{t\wedge\tau_D}L_{\delta,\mu, V}\phi(X(s))ds$
is a martingale.

Let $\phi_{\delta,\mu,V}$ denote the solution to the Dirichlet problem
\begin{equation}\label{Dirprob}
\begin{aligned}
&L_{\delta,\mu,V}\phi=0\ \text{in}\ D;\\
&\phi=f\ \text{on}\ \partial D,
\end{aligned}
\end{equation}
where $f$ is  continuous.
It follows that $\phi_{\delta,\mu,V}(X(t\wedge \tau_D))$ is a martingale; thus $\phi(x)=E_x\phi_{\delta,\mu,V}(X(t\wedge\tau_D))$, for all t.
Letting $t\to\infty$ gives the stochastic
representation
\begin{equation}\label{stochrep}
\phi_{\delta,\mu,V}(x)=E_xf(X(\tau_D)).
\end{equation}
In this paper we investigate the behavior of $\phi_{\delta,\mu,V}$ as $\delta\to0$; that is, in the small diffusion limit.
Since $L_{0,\mu,V}\phi=V(x)(\int_D\phi d\mu-\phi(x))$, one expects that $\lim_{\delta\to0}\phi_{\delta,\mu,V}(x)$ will be
independent of $x\in D$, and we can prove this trivially  via the stochastic representation in \eqref{stochrep}.
We wish to calculate the constant
\begin{equation}\label{const}
 \phi_{0,\mu,V}(f)\equiv\lim_{\delta\to0}\phi_{\delta,\mu,V}(x), \ x\in D.
\end{equation}

The above constant depends very strongly on the behavior of $\mu$ near the boundary. Here we treat the
case that supp$(\mu)\cap\partial D\neq\emptyset$ and that $\mu$ has a density in a neighborhood of the boundary. 
The density may vanish on the boundary.
Let
$\tilde L$ denote the formal adjoint of $L$:
$$
\tilde L=\frac12\nabla\cdot a\nabla -b\nabla-\nabla\cdot b.
$$

\begin{theorem}\label{th}
Let $D\subset R^d$, $d\ge1$, be a bounded domain with a $C^{2,\alpha}$-boundary ($\alpha\in(0,1]$) and
let $\mu\in\mathcal P(D)$. Assume that $V>0$ on $\bar D$.
Let $D^\epsilon=\{x\in D: \text{dist}(x,\partial D)<\epsilon\}$.

\noindent  Assume that
for some
$\epsilon>0$, the restriction of $\mu$ to $D^\epsilon$ possesses a
density: $\mu(dx)|_{D^\epsilon}\equiv\mu(x)dx$.
Assume that for some $k\ge0$, the following conditions hold.
If $k$ is even, assume that $\mu\in C^k(\bar D^\epsilon)$;  if $k$ is odd,
assume that $\mu\in C^{k+1}(\bar D^\epsilon)$.
 Assume that
$$
\begin{aligned}
&\frac{d^\beta \mu}{d x^\beta}\equiv0 \ \text{on}\ \partial D,\ \text{ for all}\ |\beta|\le k-1,\ \text{if}\ k\ge1; \\
&\frac{d^\beta \mu}{d x^\beta}\not\equiv0 \ \text{on}\ \partial D,\ \text{for some}\ |\beta|=k,\ \text{if}\ k\ge0.
\end{aligned}
$$
Assume that $V\in C^{2,\alpha}(\bar D)$, if $k=0,2$, and that $V\in C^k(\bar D)$, if $k\ge4$ is even;
assume   that $V\in C^{k+1}(\bar D)$, if $k$ is odd.

\noindent Assume that  $a_{i,j}, b_i\in C^{1,\alpha}(\bar D)$, if $k=0,2$, and that $a_{i,j}, b_i\in C^{k-1}(\bar D)$,
if $k\ge4$ is even; assume that $a_{i,j}, b_i\in C^{1,\alpha}(\bar D)$, if $k=1$, and that $a_{i,j}, b_i\in C^k(\bar D)$, if
$k\ge3$ is odd.

\noindent Let $n$ denote the inward unit normal to $D$ at $\partial D$. Let $\sigma$ denote Lebesgue measure on $\partial D$.
\newline
If $k$ is even, then the solution $\phi_{\delta,\mu,V}$ to \eqref{Dirprob} satisfies \eqref{const} with
\begin{equation}\label{even}
\phi_{0,\mu,V}(f)=\frac{\int_{\partial D}
f\sqrt{(n\cdot an)}V^{-\frac{k+1}2}\tilde L^{\frac k2}\mu d\sigma}{\int_{\partial D}\sqrt{(n\cdot an)}V^{-\frac{k+1}2}\tilde L^{\frac k2}\mu d\sigma}.
\end{equation}
If $k$ is odd, then  the solution $\phi_{\delta,\mu,V}$ to \eqref{Dirprob} satisfies \eqref{const} with
\begin{equation}\label{odd}
\phi_{0,\mu,V}(f)=\frac{\int_{\partial D}f
V^{-\frac{k+1}2}a\nabla(\tilde L^{\frac{k-1}2}\mu)\cdot nd\sigma}{\int_{\partial D}V^{-\frac{k+1}2}a\nabla(\tilde L^{\frac{k-1}2}\mu)\cdot nd\sigma}.
\end{equation}
In particular then, if $k=0$, one has
\begin{equation}
\phi_{0,\mu,V}(f)=\frac{\int_{\partial D}f\sqrt{(n\cdot an)}\frac\mu{\sqrt {V}}d\sigma}{\int_{\partial D}\sqrt{(n\cdot an)}\frac\mu{\sqrt {V}}d\sigma},
\end{equation}
and if $k=1$, one has
\begin{equation}
\phi_{0,\mu,V}(f)=\frac{\int_{\partial D}f
V^{-1}a\nabla \mu\cdot nd\sigma}{\int_{\partial D}V^{-1}a\nabla\mu\cdot nd\sigma}.
\end{equation}
\end{theorem}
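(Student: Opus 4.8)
The plan is to collapse the non-local self-consistent problem to a linear one, then to a single boundary-layer analysis, and finally to rewrite the resulting constant invariantly. \emph{Step 1 (linearization and an adjoint identity).} Set $c_\delta:=\int_D\phi_{\delta,\mu,V}\,d\mu$, a constant. Then $\phi_{\delta,\mu,V}$ solves the linear problem $\delta L\phi-V\phi=-Vc_\delta$ in $D$, $\phi=f$ on $\partial D$; if $w^g_\delta$ denotes the solution of $\delta Lw-Vw=0$ in $D$, $w=g$ on $\partial D$, one has $\phi_{\delta,\mu,V}=c_\delta(1-w^1_\delta)+w^f_\delta$, and integrating against $\mu$ yields the exact identity $c_\delta=\bigl(\int_D w^f_\delta\,d\mu\bigr)/\bigl(\int_D w^1_\delta\,d\mu\bigr)$, the denominator being positive. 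Let $\Psi_\delta$ solve the adjoint Dirichlet problem $\delta\tilde L\Psi_\delta-V\Psi_\delta=-\mu$ in $D$, $\Psi_\delta=0$ on $\partial D$; then $\Psi_\delta\ge0$, and near $\partial D$ it is smooth since $\mu$ has a density there. Green's identity for $(w^g_\delta,\Psi_\delta)$, using $\Psi_\delta|_{\partial D}=0$ (so $\nabla\Psi_\delta=(n\cdot\nabla\Psi_\delta)\,n$ on $\partial D$), collapses to $\int_D w^g_\delta\,d\mu=\tfrac{\delta}{2}\int_{\partial D}g\,(n\cdot an)\,(n\cdot\nabla\Psi_\delta)\,d\sigma$, hence
\[
c_\delta=\frac{\int_{\partial D}f\,(n\cdot an)\,(n\cdot\nabla\Psi_\delta)\,d\sigma}{\int_{\partial D}(n\cdot an)\,(n\cdot\nabla\Psi_\delta)\,d\sigma}\qquad\text{for every }\delta>0.
\]
Thus, uniformly in $f$, everything is reduced to the $\delta\downarrow0$ behaviour of $n\cdot\nabla\Psi_\delta$ on $\partial D$. (A barrier estimate for $\delta\tilde L-V$ shows $\Psi_\delta$ near $\partial D$ depends, modulo $O(e^{-c/\sqrt\delta})$, only on $\mu|_{D^\epsilon}$, so one may assume $\mu$ has a density up to $\partial D$; positivity of both integrals for small $\delta$ follows from Hopf's lemma and the hypothesis $(n\cdot\nabla)^k\mu\not\equiv0$ on $\partial D$.)

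\emph{Step 2 (boundary layer).} In boundary normal coordinates $(r,\theta)$, $r=\mathrm{dist}(\cdot,\partial D)$, I would construct a matched asymptotic expansion. The outer expansion $\sum_{j}\delta^j\psi_j$ has $\psi_0=\mu/V$ and $\psi_{j+1}=V^{-1}\tilde L\psi_j$; by hypothesis $\psi_0$ vanishes to order $k$ at $\partial D$. The inner variable is $\rho=r/\sqrt\delta$, and the leading inner profile $W(\rho,\theta)$ solves the constant-coefficient ODE $\tfrac12(n\cdot an)(\theta)W_{\rho\rho}-V(\theta)W=-\tfrac1{k!}(n\cdot\nabla)^k\mu(\theta)\,\rho^k$ on $\rho>0$, with $W(0)=0$ and $W$ of at most polynomial growth; its solution is the degree-$k$ polynomial particular solution plus $A(\theta)e^{-\kappa(\theta)\rho}$ with $\kappa=\sqrt{2V/(n\cdot an)}$ and $A$ minus the constant term of that polynomial — the exponential corrector being present precisely when $k$ is even (when $k$ is odd the polynomial is odd, so its constant term vanishes by parity). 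Since $n\cdot\nabla\Psi_\delta=\delta^{(k-1)/2}W_\rho(0,\cdot)+(\text{lower order})$, reading off $W_\rho(0)$ gives, uniformly on $\partial D$,
\[
(n\cdot\nabla\Psi_\delta)(y)=\bigl(2^{(1-k)/2}+o(1)\bigr)\,\delta^{(k-1)/2}\,(n\cdot an)^{(k-1)/2}\,V^{-(k+1)/2}\,(n\cdot\nabla)^k\mu(y).
\]
Inserting this into the identity for $c_\delta$ and letting $\delta\downarrow0$ (the denominator below being positive) gives
\[
\phi_{0,\mu,V}(f)=\frac{\int_{\partial D}f\,(n\cdot an)^{(k+1)/2}\,V^{-(k+1)/2}\,(n\cdot\nabla)^k\mu\,d\sigma}{\int_{\partial D}(n\cdot an)^{(k+1)/2}\,V^{-(k+1)/2}\,(n\cdot\nabla)^k\mu\,d\sigma}.
\]

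\emph{Step 3 (invariant form).} Writing $\mu=r^k\mu_k$ near $\partial D$, so $(n\cdot\nabla)^k\mu|_{\partial D}=k!\,\mu_k|_{\partial D}$, and using that $\tilde L$ sends a function vanishing to order $m\ge2$ at $\partial D$ to one vanishing to order $m-2$ with leading coefficient multiplied by $\tfrac12 m(m-1)(n\cdot an)$ (the first-order part of $\tilde L$ and the derivatives of $a$ contributing only to higher order in $r$), induction gives $\tilde L^{k/2}\mu|_{\partial D}=2^{-k/2}(n\cdot an)^{k/2}(n\cdot\nabla)^k\mu$ when $k$ is even, and $a\nabla(\tilde L^{(k-1)/2}\mu)\cdot n|_{\partial D}=2^{-(k-1)/2}(n\cdot an)^{(k+1)/2}(n\cdot\nabla)^k\mu$ when $k$ is odd. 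Substituting these into the last display produces exactly \eqref{even} and \eqref{odd}; the cases $k=0,1$ are the specializations $\tilde L^0\mu=\mu$ and $a\nabla\mu\cdot n=(n\cdot an)(n\cdot\nabla\mu)$.

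\emph{Main obstacle.} The hard part is making Step 2 rigorous: the uniform validity of the matched expansion up to $\partial D$, and above all the passage from an $L^\infty$ bound on the remainder in $D$ to a bound on its normal derivative on $\partial D$. Since the boundary layer has width $\sqrt\delta$, a crude transfer loses a factor $\delta^{-1/2}$, so the approximate solution must be carried to enough orders (on the order of $k+1$ outer terms together with the matching inner correctors) that the remainder is $o(\delta^{k/2})$; then the maximum principle for $\delta\tilde L-V$ (available for small $\delta$ because $V>0$) controls the remainder in $D$, and boundary Schauder estimates together with a Hopf-type barrier upgrade this to the required $C^1$ control at $\partial D$. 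The graded regularity hypotheses on $\mu,V,a,b$ are exactly what is needed for the outer terms $\psi_j$ and the inner correctors to be regular enough for these estimates to close.
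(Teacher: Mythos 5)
Your Step 1 is correct and is in fact a cleaner reduction than the paper's: the decomposition $\phi_{\delta,\mu,V}=c_\delta(1-w^1_\delta)+w^f_\delta$ and the identity $c_\delta=\int_Dw^f_\delta\,d\mu\big/\int_Dw^1_\delta\,d\mu$ are exactly right (note $w^1_\delta$ is the paper's $u_{\delta,V}$, and $c_\delta$ is the conditional exit average that the paper reaches probabilistically by partitioning $\bar D$ into pieces $A_j$ and invoking its Lemmas 1--3; your route avoids that partition entirely). The Green identity against the adjoint problem for $\Psi_\delta$ is also correct, and your Step 3 algebra relating $(n\cdot an)^{(k+1)/2}(n\cdot\nabla)^k\mu$ to $\tilde L^{k/2}\mu$ and $a\nabla(\tilde L^{(k-1)/2}\mu)\cdot n$ checks out, so the formal answer reproduces \eqref{even} and \eqref{odd}, constants included.

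The genuine gap is Step 2, and you have correctly located it yourself: everything now rests on the claim that $n\cdot\nabla\Psi_\delta\sim 2^{(1-k)/2}\delta^{(k-1)/2}(n\cdot an)^{(k-1)/2}V^{-(k+1)/2}(n\cdot\nabla)^k\mu$ uniformly on $\partial D$, and this is asserted via a matched-asymptotics program that is not carried out. Observe that, by your own Green identity, the weak form of this claim (tested against continuous $f$) is \emph{identical} to the paper's Proposition \ref{key}, i.e.\ to the asymptotics of $\delta^{-\frac{k+1}2}\int_Du_{\delta,V}\,d\mu$ --- so Step 2 is not a technical afterthought but the entire analytic content of the theorem. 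Your proposed route to it is moreover harder than necessary: you aim at a pointwise, uniform expansion of $\nabla\Psi_\delta$ at $\partial D$, which requires controlling the normal derivative of a remainder across a boundary layer of width $\sqrt\delta$ (the loss of $\delta^{-1/2}$ you mention), and constructing $O(k)$ inner correctors with the attendant regularity bookkeeping; none of this is done. The paper sidesteps the boundary-layer expansion altogether: it integrates $\int_Du_{\delta,V}\mu_0\,dx$ by parts against $Lu_{\delta,V}=\delta^{-1}Vu_{\delta,V}$ repeatedly ($\lceil k/2\rceil$ times), each step using the vanishing of the relevant derivatives of $\mu_0$ on $\partial D$ to kill or isolate one boundary term, so that only two analytic inputs are needed: the interior exponential decay of $u_{\delta,V}$ (its Lemma \ref{expdecay}) and the single leading-order flux limit $\delta^{1/2}n\cdot a\nabla u_{\delta,V}\to-\sqrt{2V(n\cdot an)}$ (its Lemma \ref{boundary}, proved by freezing coefficients and solving a one-dimensional two-point problem), together with an auxiliary function $w$ with $\tilde L(w/V)=0$ to convert the even-$k$ bulk integral into a boundary flux. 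If you replace your Step 2 by this integration-by-parts scheme (applied to $\int w^f_\delta\,d\mu$ directly, which your Step 1 already isolates), your argument closes; as written, it does not.
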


\medskip

\noindent \bf Remark 1.\rm\ Note that if $k=0$ or $k=1$, then
$\phi_{0,\mu,V}(f)$ depends  on the diffusion coefficient $a$, but not on the drift coefficient $b$, whereas for $k\ge2$,
$\phi_{0,\mu,V}(f)$ depends on  $a$ and $b$. 
\medskip

\noindent\bf Remark 2.\rm\ If $\mu$ has compact support,
the behavior of $\phi_{0,\mu,V}(f)$ is completely different.
In this case, $\phi_{0,\mu,V}(f)$ can be studied using
Wentzell-Freidlin action functionals. Assuming the uniqueness of
the minimum of a certain such functional, $\phi_{0,\mu,V}(f)$ will have the form $f(x_0)$ for some
$x_0\in\partial D$.
\medskip

Define the contraction semigroup
\begin{equation*}
T_t^{\delta,\mu,V}f(x)=E_x^{\delta,\mu,V}(f(X(t)); \tau_D>t),\  f\in C_0(\bar D),
\end{equation*}
where $C_0(\bar D)$ is the space of continuous functions on $\bar D$ vanishing on $\partial D$.
The infinitesimal generator of this semigroup is an extension of
the operator $L_{\delta,\mu,V}$, defined on
$C^2(\bar D)\cap\{\phi:\phi, L_{\delta,\mu,V}\phi\in C_0(\bar D)\}$
with the homogeneous Dirichlet boundary condition.
The  operator $T_t^{\delta,\mu,V}$ is  compact (see \cite{P09} where the case of
constant coefficients is considered); thus, the resolvent operator
for $T_t^{\delta,\mu,V}$ is also  compact, and consequently  the spectrum $\sigma(L_{\delta,\mu,V})$ of $L_{\delta,\mu,V}$
consists exclusively of eigenvalues.
By the Krein-Rutman theorem, one deduces that $-L_{\delta,\mu,V}$ possesses a principal eigenvalue,
$\lambda_0(\delta,\mu,V)$; that is, $\lambda_0(\delta,\mu,V)$ is real and simple  and satisfies
$\lambda_0(\delta,\mu,V)=\inf\{\text{Re}(\lambda):\lambda\in\sigma(-L_{\delta,\mu,V})\}$ \cite{P95}.
It is known that $\lambda\in\sigma(-L_{\delta,\mu,V})$
if and only if $\exp(-\lambda t)\in\sigma(T_t^{\delta,\mu,V})$ \cite{Pazy83}.
Thus, since $||T^{\delta,\mu,V}_t||<1$, it follows that $\lambda_0(\delta,\mu,V)>0$.
We have
$$
\sup_{f\in C_0(\bar D),||f||\le1}||T_t^{\delta,\mu,V}f||=\sup_{x\in D}P_x^{\delta,\mu,V}(\tau_D>t);
$$
 thus, a standard result \cite{RS72} allows us to conclude that
$$
\lim_{t\to\infty}\frac1t\log \sup_{x\in D}P_x^{\delta,\mu,V}(\tau_D>t)=-\lambda_0(\delta,\mu,V).
$$
It is well known that this is equivalent to
\begin{equation}\label{largetime}
\lim_{t\to\infty}\frac1t\log P_x^{\delta,\mu,V}(\tau_D>t)=-\lambda_0(\delta,\mu,V),\ \ x\in D.
\end{equation}
In the case that $L=\frac12\Delta$, that is the case that the underlying motion is Brownian motion,
the papers   \cite{P09}, \cite{AP11} investigated  the behavior of the principal eigenvalue $\lambda_0(\delta,\mu,V)$ as $\delta\to0$.
(Actually, in those papers, one finds the operator
$\gamma L_{\frac1\gamma, \mu,V}\phi=\frac12 \Delta\phi +\gamma V(\int_D\phi d\mu-\phi)$ with $\gamma\to\infty$.)
The key calculations contained in Proposition \ref{key} of this paper for the case of a general diffusion operator $L$  generalize
calculations in \cite{AP11} for the operator $\frac12\Delta$.
Using the methods of \cite{AP11} along with Proposition \ref{key}, one obtains the following generalization of the results
in \cite{AP11}.
\begin{theorem}\label{th-ev}
Let the assumptions  of Theorem \ref{th} be in effect for some $k\ge0$.
Then
the principal eigenvalue $\lambda_0(\delta,\mu,V)$ of the operator $-L_{\delta,\mu,V}$
behaves asymptotically as follows:

\noindent i. If $k$ is even,
\begin{equation}\label{odd}
\lim_{\delta\to0}\delta^{-\frac{k+1}2}\lambda_0(\delta,\mu,V)=\frac{\int_{\partial D}\sqrt{(n\cdot an)}V^{-\frac{k+1}2}\tilde L^{\frac k2}\mu d\sigma}
{\sqrt2\int_D\frac1{ V}d\mu};
\end{equation}

\noindent ii. If $k$ is odd,
\begin{equation}\label{even}
\lim_{\delta\to0}\delta^{-\frac{k+1}2}\lambda_0(\delta,\mu,V)=\frac{\int_{\partial D}V^{-\frac{k+1}2}a\nabla(\tilde L^{\frac{k-1}2}\mu)\cdot nd\sigma}
{2\int_D\frac1{ V}d\mu}.
\end{equation}

\end{theorem}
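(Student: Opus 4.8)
\noindent The plan is to follow the framework of \cite{AP11}, which proves the corresponding statement for $L=\tfrac12\Delta$, and to feed into it the boundary-layer asymptotics of Proposition \ref{key}, which are valid for a general $L$. I would begin by setting up the analytic skeleton in terms of the quasi-stationary distribution. Let $\eta=\eta_{\delta,\mu,V}\ge0$ be the left principal eigenfunction of $-L_{\delta,\mu,V}$ (the density of the quasi-stationary distribution), normalized by $\int_D\eta\,dx=1$; then $\eta=0$ on $\partial D$ and, with $C:=\int_D V\eta\,dx$,
\[
\delta\tilde L\eta-(V-\lambda_0)\eta=-C\mu\quad\text{in }D .
\]
Integrating over $D$, using $\mu\in\mathcal P(D)$ and $\int_D\tilde L\eta\,dx=-\tfrac12\int_{\partial D}(a\nabla\eta\cdot n)\,d\sigma$ (the other boundary terms vanishing since $\eta|_{\partial D}=0$, with $n$ the inward normal), yields the exact identity
\[
\lambda_0(\delta,\mu,V)=\frac\delta2\int_{\partial D}(a\nabla\eta\cdot n)\,d\sigma .
\]
So the theorem reduces to the small-$\delta$ behavior of the inward normal flux of $\eta$ on $\partial D$ and to the value of $C$. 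Both depend only on $\eta$ near $\partial D$, where $\mu$ has a density, so a standard approximation (mollify $\mu$ in the interior, keeping it fixed on a collar of $\partial D$, and use continuity of $\lambda_0$ and of $\int_D V^{-1}d\mu$ in $\mu$, as in \cite{AP11}) lets me assume throughout that $\mu$ has a smooth density on all of $D$, thereby removing the hypothesis that $\mu$ has a density only on $D^\epsilon$.

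The substance is the matched asymptotic expansion of $\eta$, which is precisely the content of Proposition \ref{key} (for $\tfrac12\Delta$ this is the computation in \cite{AP11}). In the interior, inserting $\eta\sim\sum_{j\ge0}\delta^j\eta_j$ gives $\eta_0=C\mu/(V-\lambda_0)$ and $\eta_{j+1}=(V-\lambda_0)^{-1}\tilde L\eta_j$; since near $\partial D$ the top-order part of $\tilde L$ is $\tfrac12(n\cdot an)\partial_n^2$, which to leading order commutes with multiplication by powers of $V$ on functions vanishing to high order at $\partial D$, one finds $\eta_j\sim C\,V^{-(j+1)}\tilde L^{\,j}\mu$ near $\partial D$, vanishing there to order $k-2j$. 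Thus the interior expansion is consistent through order $\delta^{\lfloor k/2\rfloor}$, and the discrepancy is carried by a layer of width $\delta^{1/2}$: putting $\rho=\text{dist}(\cdot,\partial D)=\delta^{1/2}s$ and $\eta=\delta^{k/2}\zeta(s,y)$, the layer equation becomes, to leading order, $-\tfrac12(n\cdot an)\zeta''+V\zeta=C\,b(y)s^k$ with $\zeta(0,y)=0$, $b(y)=\tfrac1{k!}\partial_n^k\mu|_{\partial D}$, the coefficients frozen on $\partial D$, and $\zeta$ of polynomial growth matching the $\eta_j$. The particular solution is a degree-$k$ polynomial in $s$ containing only powers of the parity of $k$, and the homogeneous correction enforcing $\zeta(0,y)=0$ is $-\zeta_{\mathrm{poly}}(0,y)e^{-s\sqrt{2V/(n\cdot an)}}$. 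Hence $a\nabla\eta\cdot n|_{\partial D}=(n\cdot an)\,\delta^{(k-1)/2}\,\partial_s\zeta(0,y)$, and one reads off: for even $k$ the polynomial has a nonzero constant term $p_0(y)=C\,V^{-(k+2)/2}\tilde L^{\,k/2}\mu|_{\partial D}$ and no $s$-term, so $\partial_s\zeta(0,y)=p_0(y)\sqrt{2V/(n\cdot an)}$ and $a\nabla\eta\cdot n|_{\partial D}\sim\sqrt2\,\delta^{(k-1)/2}C\,\sqrt{n\cdot an}\,V^{-(k+1)/2}\tilde L^{\,k/2}\mu$; for odd $k$ the polynomial has no constant term, so there is no exponential part and $\partial_s\zeta(0,y)=p_1(y)=C\,V^{-(k+1)/2}\partial_n(\tilde L^{(k-1)/2}\mu)|_{\partial D}$, whence $a\nabla\eta\cdot n|_{\partial D}\sim\delta^{(k-1)/2}C\,V^{-(k+1)/2}\,a\nabla(\tilde L^{(k-1)/2}\mu)\cdot n$ (using that $\tilde L^{(k-1)/2}\mu$ vanishes on $\partial D$, so its boundary gradient is normal).

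Substituting into the flux identity, the prefactor $\tfrac\delta2$ combines with $\delta^{(k-1)/2}$ to give $\delta^{(k+1)/2}$, and the numerical constants $\tfrac12\sqrt2=1/\sqrt2$ (even $k$) and $\tfrac12$ (odd $k$) appear; identifying $C$ from the normalization ($\eta\to C\,V^{-1}\mu$ in the interior, so $\int_D\eta\,dx=1$ forces $C\to(\int_D V^{-1}d\mu)^{-1}$) produces exactly (i) and (ii). I would then note that the regularity hypotheses on $a_{i,j},b_i,V,\mu$ in Theorem \ref{th} are precisely those needed for $\tilde L^{\,k/2}\mu$ (resp.\ $\tilde L^{(k-1)/2}\mu$) to be defined and $C^\alpha$ up to $\partial D$ and for the interior expansion to reach the required order with remainder $o(\delta^{(k+1)/2})$ after one application of $\delta L$; and that, since $-L_{\delta,\mu,V}$ is not self-adjoint, the identification of the leading coefficient should --- as in \cite{AP11} --- be made rigorous by obtaining matching upper and lower bounds for $\lambda_0$ from the Berestycki--Nirenberg--Varadhan min--max characterization applied to $-L_{\delta,\mu,V}$ and to its formal adjoint, using test functions built from the expansion of Proposition \ref{key}.

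The main obstacle is Proposition \ref{key} itself. For variable coefficients one cannot separate variables as one does for $\tfrac12\Delta$, so the interior recursion, the leading-order identity $\eta_j\sim C\,V^{-(j+1)}\tilde L^{\,j}\mu$ near $\partial D$, and the remainder estimates (in suitable weighted Hölder norms, uniform in small $\delta$) justifying the matching must be carried out directly; this is where essentially all the work lies. Within the present deduction the only delicate bookkeeping is the parity of $k$: for odd $k$ the leading nonvanishing interior term vanishes to order exactly $1$ at $\partial D$, so it is its normal derivative, not its value, that the boundary layer must match, which is what replaces $\tilde L^{\,k/2}\mu$ by $a\nabla(\tilde L^{(k-1)/2}\mu)\cdot n$ and $\sqrt2$ by $2$ in the answer --- together with the check that the upper and lower bounds for the non-self-adjoint problem agree to leading order.
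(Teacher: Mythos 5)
Your formal computation produces the right constants, and the reduction $\lambda_0(\delta,\mu,V)=\frac{\delta}{2}\int_{\partial D}a\nabla\eta\cdot n\,d\sigma$ for the normalized adjoint eigenfunction is a correct and natural starting point. But there is a genuine gap in how you propose to control $\eta$. Proposition \ref{key} is \emph{not} ``precisely the content'' of your matched expansion: it is a statement about $\int_{A_j}u_{\delta,V}\,d\mu$, where $u_{\delta,V}$ solves the \emph{local} Dirichlet problem $\delta Lu-Vu=0$, $u=1$ on $\partial D$ --- not about the eigenfunction of the non-local adjoint operator. As written, your argument therefore rests entirely on the unjustified boundary-layer expansion of $\eta$ (the interior recursion, the identification $\eta_j\sim CV^{-(j+1)}\tilde L^{j}\mu$ near $\partial D$, the layer matching, and the remainder estimates), none of which is supplied by results available in the paper; you yourself flag this as ``where essentially all the work lies,'' which is an accurate description of what is missing.

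The missing bridge --- and, in substance, the route the paper takes by combining Proposition \ref{key} with the methods of \cite{AP11} --- is duality between $\eta$ and $u_{\delta,V}$. Green's identity for the pair $(u_{\delta,V},\eta)$, together with $u_{\delta,V}=1$ and $\eta=0$ on $\partial D$ and your flux identity, gives the exact relation
\begin{equation*}
\lambda_0(\delta,\mu,V)\Bigl(1-\int_D u_{\delta,V}\,\eta\,dx\Bigr)=\Bigl(\int_D V\eta\,dx\Bigr)\int_D u_{\delta,V}\,d\mu .
\end{equation*}
Since $0\le u_{\delta,V}\le1$ tends to $0$ boundedly pointwise (Lemma \ref{expdecay}) and $\int_D\eta\,dx=1$, the left factor tends to $1$, and Proposition \ref{key} then yields the numerator $\delta^{\frac{k+1}{2}}$-asymptotics exactly as stated --- no expansion of $\eta$ near $\partial D$ is needed. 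What remains, and what you assert rather than prove, is $\int_DV\eta\,dx\to(\int_DV^{-1}d\mu)^{-1}$, i.e.\ convergence of the quasi-stationary density to $V^{-1}\mu/\int_DV^{-1}d\mu$ in the appropriate sense; this is the genuinely non-trivial input from \cite{AP11} (proved there for $\frac12\Delta$) and cannot be read off from the formal interior relation $\eta_0=C\mu/(V-\lambda_0)$ without an $L^1(D)$ convergence argument. A brief justification that the adjoint eigenvector is an absolutely continuous measure (it is, since $\eta=-C(\delta\tilde L-V+\lambda_0)^{-1}\mu$ and the resolvent maps measures to $L^1$) should also be included. So: right skeleton and right constants, but the two analytic pillars --- the control of $\eta$ at the boundary (best obtained by the duality reduction to Proposition \ref{key} rather than a fresh layer analysis) and the identification of $\int_DV\eta\,dx$ --- are not established.
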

\medskip

\noindent \bf Remark 1.\rm\
 Note that if $k=0$ or $k=1$, then the leading asymptotic behavior of
$\lambda_0(\delta,\mu,V)$ depends  on the diffusion coefficient $a$, but not on the drift coefficient $b$, whereas for $k\ge2$,
it depends on $a$ and $b$.
\medskip

\bf \noindent Remark 2.\rm\ We note that if $\mu$ has compact support, then there exist constants
$c_1,c_2>0$ such that $\exp(-c_2\delta^{-\frac12})\le \lambda_0(\delta,\mu,V)\le \exp(-c_1\delta^{-\frac12})$,
for small $\delta>0$. This was proven in  \cite{P09, AP11} for the case $L=\frac12\Delta$. The same type of proof works for general $L$.
\medskip

If $\mu\not\equiv0$ on $\partial D$, then Theorem \ref{th-ev} gives
\begin{equation}\label{basiccase}
\lambda_0(\delta,\mu,V)\sim\frac{\int_{\partial D}\sqrt{(n\cdot an)}\frac{\mu}{V^\frac12}d\sigma}{\sqrt2\int_D\frac1Vdx}\delta^\frac12,\
\text{as}\ \delta\to0.
\end{equation}
Theorem \ref{th-ev} is proven under the assumption that $V>0$ in $\bar D$. This condition is essential.
Note that if $V$ vanishes in a  sub-domain $A\subset D$, then
as long as the process $X(t)$ remains in $A$, it never jumps, and thus starting from a point in $A$, 
the probability that $X(t)$ does not exit  $D$ by time $t$ is greater than the probability that a $\delta L$-diffusion
process does not exit $A$ by time $t$; thus, in light \eqref{largetime} and the corresponding equation  for
the $\delta L$ diffusion process, it follows that  $\lambda_0(\delta,\mu,V)\le \delta \lambda_0^A(L)$, where
$\lambda_0^A(L)>0$ is the principal eigenvalue for $-L$ in $A$.
In particular, $\lambda_0(\delta,\mu,V)$ is on 
a smaller order than in \eqref{basiccase}.

Now consider the case that $V>0$ in $D$ but $V\equiv0$ on $\partial D$.
On the one hand, since the process  needs to  \bf not\rm\ jump in order to exit $D$,  allowing the jump mechanism to
weaken at the boundary should help the process exit. Thus, if $\mu\not\equiv0$ on $\partial D$, one might expect
that $\lambda_0(\delta,\mu,V)$ will be on a larger order than $\delta^\frac12$.
But on the other hand, if $V\equiv0$ in $D^\epsilon$, then by the argument in the previous paragraph,
$\lambda_0(\delta,\mu,V)\le\lambda^{D^\epsilon}_0(L)\delta$.
Thus, we expect that if $V$ vanishes identically on the boundary to high enough order, then
$\lambda_0(\delta,\mu,V)$ will be on a smaller order than $\delta^\frac12$.

Now let $V_\epsilon\equiv\epsilon+\hat V$,where $\hat V>0$ in $D$ and $\hat V\equiv0$ on $\partial D$,
and  substitute  $V_\epsilon$ for $V$ in the righthand side of \eqref{basiccase}.
If $\hat V$  vanishes to the  first order on $\partial D$, then
the right hand side of \eqref{basiccase} is on the order  $(\epsilon^\frac12\log\epsilon)^{-\frac12}$;
in particular, it converges to $\infty$.  This suggests that if $V$ vanishes to  first
order on $\partial D$, then $\lambda_0(\delta,\mu,V)$ will  be on a larger order than $\delta^\frac12$.
If $\hat V$  vanishes to  second order on $\partial D$, then
the right hand side of \eqref{basiccase} stays bounded and bounded from 0
as $\epsilon\to0$. This suggests that if $V$ vanishes to second order, then
$\lambda_0(\delta,\mu,V)$  will be on the order $\delta^\frac12$, as in \eqref{basiccase}.
If $\hat V$  vanishes to  third order on $\partial D$, then
the right hand side of \eqref{basiccase} goes to  0
as $\epsilon\to0$.  This suggests that if $V$ vanishes to third order or higher, then
$\lambda_0(\delta,\mu,V)$ will be on a smaller order than $\delta^\frac12$.

\bf \noindent Open Question:\it\ Consider the case that $\mu\not\equiv0$ on $\partial D$, so that if $V$ were strictly positive in
$\bar D$, then \eqref{basiccase} would hold.  Assume that  $V>0$ in $D$ and that $V$ vanishes identically
on $\partial D$ to the order $k$, $k\ge1$.  At what order does 
$\lambda_0(\delta,\mu,V)$ approach 0 when $\delta\to0$?\rm\

\medskip

In section 2 we present several auxiliary results which then allow for a quick  proof of Theorem \ref{th}. The proof
of one of the auxiliary results is deferred  to section 3.

\section{Auxiliary Results and Proof of Theorem \ref{th}}
In this section we present three lemmas and one proposition, from which the theorem will follow quickly.
We begin however with a useful construction of the
 process $X(\cdot)$ up to its exit time from $\partial D$.
On a common probability space with probability measure $\mathcal{P}^\delta$, let $\{Y_n\}_{n=0}^\infty$ be an independent sequence of diffusion
processes, where each $Y_n(\cdot)$ is a diffusion corresponding to the operator $\delta L$ and stopped upon reaching the
boundary, and where   $Y_0(0)=x\in D$,  and the  distribution of $Y_n(0)$
for $n\ge1$ is $\mu$. (We have purposely suppressed the dependence of $\mathcal{P}^\delta$ on $x$. Note that
$Y_n$ does not depend on $x$ for $n\ge1$.)
Let $\mathcal{F}_{t,n}=\sigma(Y_n(s),0\le s\le t)$ denote the filtration up to time $t$ for the process $Y_n(\cdot)$.
Denote the exit time of $Y_n(\cdot)$ from $D$ by $\tau_{D,n}$.
Let $J_n$ be a stopping time for $Y_n(\cdot)$ satisfying
$\mathcal{P}^\delta(J_n>t|\mathcal{F}_{t,n})=\exp(-\int_0^{t\wedge\tau_{D,n}}V(Y_n(s))ds)$.
Now define by induction:
$$
\begin{aligned}
&X(t)=Y_0(t),\ \text{for}\ 0\le t<J_0\wedge\tau_{D,0};\\
& \text{if}\ J_{n-1}<\tau_{D,n-1},\ \text{then}\\
&X(t)=Y_n(t-\sum_{k=0}^{n-1}J_k),\ \text{for}\ \sum_{k=0}^{n-1}J_k\le t<\sum_{k=0}^{n-1}J_k+J_n\wedge \tau_{D,n}.
\end{aligned}
$$

Let $e_{\delta,x}(\cdot)=P_x^{\delta,\mu,V}(X(\tau_D)\in \cdot)$ denote the exit distribution of the process $X(\cdot)$ from $D$
starting from $X(0)=x$.
Let $J$ denote the first jump time of $X(\cdot)$. Since the distribution of
$\tau_{D,0}$ converges to the point mass at $\infty$ as $\delta\to0$,
we have $\lim_{\delta\to0}\mathcal{P}^\delta(\tau_{D,0}<J_0)=0$; equivalently,
 $\lim_{\delta\to0}P_x^{\delta,\mu,V}(\tau_D<J)=0$.
Since $P_x^{\delta,\mu,V}(X(\tau_D)\in\cdot|J<\tau_D)$ is independent of $x\in D$, we conclude
that the weak limit of  $e_{\delta,x}$ (which will be shown to exist) is independent of $x\in D$.
Indeed, from the above considerations  and the above  construction of $X(\cdot)$, we have
$$
\lim_{\delta\to0}e_{\delta,x}(\cdot)=\lim_{\delta\to0}\mathcal{P}^\delta(Y_1(\tau_{D,1})\in\cdot|\tau_{D,1}< J_1).
$$
(Note that the 1 appearing in four places on the righthand side above can be replaced by any $n\ge2$ without changing the
value of the expression.)

For each $k=0,1,\cdots$, let  $e_0^k(\cdot)$ denote the probability measure on $\partial D$ with density
$e_0^k(x)$ given by
$$
\begin{aligned}
&e_0^k(x)=\Large(\int_{\partial D}\sqrt{(n\cdot an)}V^{-\frac{k+1}2}\tilde L^{\frac k2}\mu d\sigma\Large)^{-1}
\sqrt{(n\cdot an)(x)}V^{-\frac{k+1}2}(x)\tilde L^{\frac k2}\mu(x),\\
& k\ge 0,  k \ \text{even};\\
&e_0^k(x)=\Large(\int_{\partial D}V^{-\frac{k+1}2}a\nabla(\tilde L^{\frac{k-1}2}\mu)\cdot nd\sigma\Large)^{-1}~ V^{-\frac{k+1}2}(x)a(x)(\nabla(
\tilde L^{\frac{k-1}2}\mu)\cdot n)(x),\\
& k\ge1, k\ \text{odd}.
\end{aligned}
$$
From \eqref{stochrep}, to prove the theorem we need to prove that
if $\mu$ satisfies the conditions  of the theorem for a particular $k\ge0$, then
$e_{\delta,x}$ converges weakly
to $e_0^k$.

From now on we assume that $\mu$ satisfies the conditions of the theorem for a particular $k\ge0$.
Fix $m\ge1$ and let $\{A_j\}_{j=1}^{m+1}$ be a partition of $\bar D$ into $m+1$ disjoint connected  sets satisfying the following conditions:
(i) $A_j$ has a nonempty interior, for all $j$; (ii) $A_j\cap \partial D$ has a nonempty interior in the relative topology of $\partial D$, for
all $j\neq m+1$; (iii)  $e_0^k(A_1\cap\partial D)>0$;
(iv) $\text{dist}(A_{m+1},\partial D)>0$.
To prove the theorem, it is enough to show that
\begin{equation}\label{toshow}
\lim_{\delta\to0}e_\delta^k(A_1\cap\partial D)=e_0^k(A_1\cap \partial D).
\end{equation}

Let $u_{\delta,V}$ denote the solution
to
\begin{equation}\label{u}
\begin{aligned}
&\delta Lu-Vu=0 \ \text{in}\ D;\\
&u=1\ \text{on}\ \partial D.
\end{aligned}
\end{equation}
Let $\mathcal{E}^\delta$ denote expectations corresponding to $\mathcal{P}^\delta$.
As is well-known, $u_{\delta, V}$ has the stochastic representation
$$
u_{\delta,V}(x)=\mathcal{E}^\delta(\exp(-\int_0^{\tau_{D,1}}V(Y_1(t)))|Y_1(0)=x).
$$
\begin{lemma}\label{intu}
\begin{equation}
\mathcal{P}^\delta(Y_1(0)\in A_j,\tau_{D,1}<J_1)=\int_{A_j}u_{\delta,V}~d\mu, \ j=1,\cdots, m+1.
\end{equation}
\end{lemma}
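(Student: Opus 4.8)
The plan is to condition on the initial point $Y_1(0)$, which has distribution $\mu$, and to exploit the independence structure built into the construction of the process. First I would write
\[
\mathcal{P}^\delta(Y_1(0)\in A_j,\ \tau_{D,1}<J_1)=\int_{A_j}\mathcal{P}^\delta(\tau_{D,1}<J_1\mid Y_1(0)=x)\,\mu(dx),
\]
so that the lemma reduces to the pointwise identity $\mathcal{P}^\delta(\tau_{D,1}<J_1\mid Y_1(0)=x)=u_{\delta,V}(x)$ for $\mu$-a.e.\ $x$. The event $\{\tau_{D,1}<J_1\}$ is the event that the diffusion $Y_1(\cdot)$ exits $D$ before the exponential-type clock $J_1$ rings.

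Next I would compute this conditional probability using the defining property of $J_1$, namely $\mathcal{P}^\delta(J_1>t\mid\mathcal{F}_{t,1})=\exp(-\int_0^{t\wedge\tau_{D,1}}V(Y_1(s))\,ds)$. Conditioning further on the whole path of $Y_1$ (equivalently on $\mathcal{F}_{\tau_{D,1},1}$) and using that $J_1$ depends on that path only through the stated survival function, one gets
\[
\mathcal{P}^\delta(\tau_{D,1}<J_1\mid Y_1(0)=x)=\mathcal{E}^\delta\!\left(\exp\Bigl(-\int_0^{\tau_{D,1}}V(Y_1(s))\,ds\Bigr)\ \Big|\ Y_1(0)=x\right).
\]
This is exactly the Feynman--Kac / stochastic representation for $u_{\delta,V}$ recorded just before the lemma, so the right-hand side equals $u_{\delta,V}(x)$. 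Substituting back and integrating against $\mu$ over $A_j$ gives the claimed formula, for each $j=1,\dots,m+1$.

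The only genuine point requiring care — the main obstacle, such as it is — is the measure-theoretic justification of the step where we pass from conditioning on $\mathcal{F}_{t,1}$ for each $t$ to conditioning on the full stopped $\sigma$-algebra $\mathcal{F}_{\tau_{D,1},1}$ and evaluate the survival probability of $J_1$ at the random time $\tau_{D,1}$. One handles this by a standard optional-stopping / monotone-class argument: $\tau_{D,1}$ is a stopping time for the filtration $(\mathcal{F}_{t,1})$, $t\mapsto\exp(-\int_0^{t\wedge\tau_{D,1}}V(Y_1(s))\,ds)$ is a bounded, continuous, adapted process that is constant for $t\ge\tau_{D,1}$, so one may take $t\to\infty$ in the identity $\mathcal{P}^\delta(J_1>t,\ \tau_{D,1}\le t\mid \mathcal{F}_{\tau_{D,1},1})=\exp(-\int_0^{\tau_{D,1}}V(Y_1(s))\,ds)\,\mathbf{1}_{\{\tau_{D,1}\le t\}}$ and use $\tau_{D,1}<\infty$ a.s. (since $D$ is bounded and $\delta L$ is nondegenerate elliptic) together with dominated convergence. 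Everything else is bookkeeping with the independence of $Y_1$ from $Y_0$ and from the clocks of the other $Y_n$'s, which is immediate from the construction.
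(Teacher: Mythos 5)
Your proposal is correct and follows essentially the same route as the paper: condition on $Y_1(0)=x$, use the defining survival function of $J_1$ together with the tower property to express $\mathcal{P}^\delta(\tau_{D,1}<J_1\mid Y_1(0)=x)$ as the Feynman--Kac expectation $\mathcal{E}^\delta(\exp(-\int_0^{\tau_{D,1}}V(Y_1(s))\,ds)\mid Y_1(0)=x)=u_{\delta,V}(x)$, and then integrate against $\mu$ over $A_j$. The paper packages the middle step as a disintegration over the value of $\tau_{D,1}$ rather than conditioning on the stopped $\sigma$-algebra, but the content is identical, and your remark on justifying the evaluation at the random time $\tau_{D,1}$ addresses a point the paper passes over silently.
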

\begin{proof}
For $x\in D$, we have
$$
\begin{aligned}
&\mathcal{P}^\delta(\tau_{D,1}<J_1|Y_1(0)=x)=
\int_0^\infty \mathcal{P}^\delta(J_1>t,\tau_{D,1}=dt|Y_1(0)=x)=\\
&\int_0^\infty \mathcal{E}^\delta\mathcal{P}^\delta(J_1>t,\tau_{D,1}=dt|\mathcal{F}_{t,1}, Y_1(0)=x)=\\
&\int_0^\infty \mathcal{E}^\delta(\exp(-\int_0^{t\wedge\tau_{D,1}}V(Y_1(s))ds)1_{dt}(\tau_{D,1})|Y_1(0)=x)=\\
&\mathcal{E}(-\int_0^{\tau_{D,1}}V(Y_1(t))dt|Y_1(0)=x)=u_{\delta,V}(x).
\end{aligned}
$$
The result follows from this.
\end{proof}
\begin{lemma}\label{expdecay}
There exists a constant $c=c(a,b,d, V)>0$ depending on the coefficients $a,b$ of $L$, on $V$ and on the dimension $d$ such that
$$
u_{\delta, V}(x)\le c\exp(-\frac{\text{dist}(x,\partial D)}{c\delta^{\frac12}}).
$$
In particular then by Lemma \ref{intu}, for some $c_1>0$,
\begin{equation}\label{expdecayprob}
\mathcal{P}^\delta(Y_1(0)\in A_{m+1},\tau_{D,1}<J_1)\le c_1\exp(-\frac{\text{dist}(A_{m+1},\partial D)}{c_1\delta^{\frac12}}).
\end{equation}
\end{lemma}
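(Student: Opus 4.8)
The plan is to establish the exponential decay estimate for $u_{\delta,V}$ by comparing it with an explicit supersolution built from the distance function to $\partial D$. Since $V>0$ on the compact set $\bar D$, let $V_0=\min_{\bar D}V>0$ and let $M$ be a bound for the zeroth-, first-, and second-order behaviour of the coefficients $a_{i,j},b_i$ on $\bar D$. Consider the barrier $w(x)=C\exp(-\rho(x)/(c\delta^{1/2}))$, where $\rho(x)=\mathrm{dist}(x,\partial D)$ and $c,C$ are to be chosen. Near $\partial D$, because the boundary is $C^{2,\alpha}$, the function $\rho$ is itself $C^2$ with $|\nabla\rho|=1$ and bounded second derivatives in a tubular neighbourhood $D^{\epsilon_0}$; one computes
\begin{equation*}
\delta L w - V w = w\Big(\tfrac{\delta}{2c^2\delta}\, (\nabla\rho\cdot a\nabla\rho) - \tfrac{\delta^{1/2}}{c}\big(\tfrac12\nabla\cdot(a\nabla\rho)+b\cdot\nabla\rho\big) - V\Big).
\end{equation*}
The first term in the parenthesis is $\tfrac{1}{2c^2}(\nabla\rho\cdot a\nabla\rho)$, which is bounded by $\Lambda/(2c^2)$ with $\Lambda$ the upper ellipticity constant; the middle term is $O(\delta^{1/2})$; and the last is $\le -V_0$. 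Hence, choosing $c$ large enough that $\Lambda/(2c^2)<V_0/2$ and then $\delta$ small enough that the $O(\delta^{1/2})$ term is $<V_0/4$, we get $\delta Lw-Vw\le 0$ in $D^{\epsilon_0}$, i.e. $w$ is a supersolution there.

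Next I would patch this local barrier into a global one. On the inner region $D\setminus D^{\epsilon_0/2}$, where $\rho\ge\epsilon_0/2$, the function $w$ is already at most $C\exp(-\epsilon_0/(2c\delta^{1/2}))$, which is small. The standard device is: on $\partial D$ we have $u_{\delta,V}=1\le w$ provided $C\ge1$; and since $0\le u_{\delta,V}\le1$ everywhere (by the stochastic representation $u_{\delta,V}(x)=\mathcal{E}^\delta(\exp(-\int_0^{\tau_{D,1}}V)\,|\,Y_1(0)=x)\in[0,1]$, or by the maximum principle applied to $\delta L u-Vu=0$ with $V>0$), we only need $w\ge u_{\delta,V}$ on the inner boundary $\{\rho=\epsilon_0/2\}$ of the tube. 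Choosing $C=\exp(\epsilon_0/(2c\delta^{1/2}))\cdot 1$ — or more cleanly, redefining $c$ so that $C$ can be taken to be an absolute constant while $w\ge1\ge u_{\delta,V}$ on $\{\rho=\epsilon_0/2\}$ — makes $w$ a supersolution on the tube $\{0<\rho<\epsilon_0/2\}$ dominating $u_{\delta,V}$ on its entire boundary. The maximum principle for the operator $\delta L-V$ (valid since $V>0$, so the operator satisfies the condition for the weak maximum principle on any subdomain) then gives $u_{\delta,V}\le w$ on $\{0<\rho<\epsilon_0/2\}$, and on the complement $u_{\delta,V}\le1\le c\exp(-\rho/(c\delta^{1/2}))$ after enlarging the constant $c$ to absorb the fixed factor $\exp(\epsilon_0/(c\delta^{1/2}))$ relative to $\mathrm{diam}(D)$. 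This yields the stated bound with $c=c(a,b,d,V)$; note the dependence is only through $\Lambda$, $V_0$, the $C^2$ norm of $\rho$ (hence the $C^{2,\alpha}$ geometry of $\partial D$, absorbed into $d$ and the domain), and $\epsilon_0$.

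Finally, the consequence \eqref{expdecayprob} is immediate: by Lemma \ref{intu},
\begin{equation*}
\mathcal{P}^\delta(Y_1(0)\in A_{m+1},\tau_{D,1}<J_1)=\int_{A_{m+1}}u_{\delta,V}\,d\mu\le c\,\mu(A_{m+1})\exp\!\Big(-\frac{\mathrm{dist}(A_{m+1},\partial D)}{c\delta^{1/2}}\Big),
\end{equation*}
using that $\rho\ge\mathrm{dist}(A_{m+1},\partial D)>0$ on $A_{m+1}$ by condition (iv), and $\mu(A_{m+1})\le1$; renaming $c$ as $c_1$ gives \eqref{expdecayprob}.

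The main obstacle is the barrier construction near the boundary: one must be careful that $\rho$ is only $C^2$ in a genuine neighbourhood of $\partial D$ (so the computation of $\delta Lw$ is only valid there), and one must track how the constant $c$ absorbs both the ellipticity/coefficient bounds and the patching factor $\exp(\epsilon_0/(c\delta^{1/2}))$ without reintroducing $\delta$-dependence — this is routine but needs the two-region argument above rather than a single global barrier. Alternatively, a purely probabilistic proof via large-deviation lower bounds for $\tau_{D,1}$ of the $\delta L$-diffusion would also work, but the PDE barrier argument is shorter and gives the explicit form directly.
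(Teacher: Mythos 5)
Your barrier computation in the tubular neighbourhood is correct, and the derivation of \eqref{expdecayprob} from Lemma \ref{intu} is fine; note also that your route is genuinely different from the paper's, which is probabilistic (it bounds $u_{\delta,V}(x)\le e^{-tV_{\min}}+\mathcal{P}^\delta(\tau_{D,1}\le t\,|\,Y_1(0)=x)$, controls the second term by a Gaussian-type exponential inequality for the maximal displacement of the $\delta L$-diffusion, and optimizes $t=\mathrm{dist}(x,\partial D)\delta^{-1/2}$). However, your patching step has a genuine gap. To run the comparison principle on the tube $\{0<\rho<\epsilon_0/2\}$ you must dominate $u_{\delta,V}$ on \emph{both} boundary components, and on the inner component $\{\rho=\epsilon_0/2\}$ you only know $u_{\delta,V}\le1$ while $w=C\exp(-\epsilon_0/(2c\delta^{1/2}))$ there. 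Forcing $w\ge1$ on that set requires $C\ge\exp(\epsilon_0/(2c\delta^{1/2}))$, which is not an absolute constant; with that choice the resulting bound $u_{\delta,V}(x)\le\exp\bigl(-(\rho(x)-\epsilon_0/2)/(c\delta^{1/2})\bigr)$ exceeds $1$ throughout the tube and is vacuous. Your fallback on the inner region, namely $u_{\delta,V}\le1\le c\exp(-\rho/(c\delta^{1/2}))$ ``after enlarging $c$,'' is simply false: for fixed $\rho>0$ and fixed $c$ the right-hand side tends to $0$ as $\delta\to0$, so no $\delta$-independent enlargement of $c$ can make $1$ smaller than it. The two-region patching therefore establishes the decay nowhere.

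The fix within your framework is to use a single global barrier built from a $C^2(\bar D)$ surrogate for the distance function rather than $\rho$ itself: for instance let $h$ solve $\Delta h=-1$ in $D$, $h=0$ on $\partial D$, so that $h\in C^{2,\alpha}(\bar D)$, $h>0$ in $D$, and by the Hopf lemma $\kappa\rho\le h\le K\rho$ on $\bar D$ for some $\kappa,K>0$. Your computation applied to $w=\exp(-h/(c\delta^{1/2}))$ is then valid on all of $D$ (all derivatives of $h$ are bounded on $\bar D$), $w=1=u_{\delta,V}$ on $\partial D$, and the maximum principle for $\delta L-V$ gives $u_{\delta,V}\le\exp(-h/(c\delta^{1/2}))\le\exp(-\kappa\rho/(c\delta^{1/2}))$ globally, which is the stated bound after renaming $c/\kappa$ as $c$. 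With that replacement your argument closes; as written, it does not.
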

\begin{proof}
Let $V_\text{min}=\min_{x\in \bar D}V(x)$.
From the stochastic representation of $u_{\delta, V}$, we have for any $t>0$,
\begin{equation}\label{bound}
\begin{aligned}
&u_{\delta, V}(x)\le \exp(-tV_{\text{min}})+\mathcal{P}^\delta(\tau_{D,1}\le t|Y_1(0)=x)\le\\
&\exp(-tV_{\text{min}})+\mathcal{P}^\delta(\max_{0\le s\le t}|Y_1(s)-x|\ge \text{dist}(x,\partial D)|Y_1(0)=x).
\end{aligned}
\end{equation}
Since $L$ has been written in divergence form, the drift of $Y_1$ is  $\delta(b+\frac12\nabla\cdot a)$.
Let $B=\max_{x\in \bar D}|b(x)+\frac12\nabla\cdot a(x) |$ and let $A=\max_{|v|=1}\max_{x\in \bar D}(v,a(x)v)$.
From \cite[Theorem 2.2-ii]{P95}, it follows that
\begin{equation}
\mathcal{P}^\delta(\max_{0\le s\le t}|Y_1(s)-x|\ge \lambda|Y_1(0)=x)\le 2d\exp(-\frac{(\lambda-\delta Bt)^2}{2d\delta At}),
\ \text{for}\ \lambda>\delta Bt.
\end{equation}
Letting $\lambda=\text{dist}(x,\partial D)$ and $t=(\text{dist}(x,\partial D))\delta^{-\frac12}$ in the above inequality and substituting the resulting estimate on
the right hand side of \eqref{bound}, we obtain
$$
u_{\delta, V}(x)\le \exp(-\delta^{-\frac12}\text{dist}(x,\partial D)V_{\text{min}})+
2d\exp(-\delta^{-\frac12}\text{dist}(x,\partial D)\frac{(1-\delta^\frac12B)^2}{2dA}),
$$
for small $\delta>0$, from which the lemma follows.
\end{proof}

The key result for proving Theorem \ref{th} is the following proposition, whose proof is postponed to section 3.
\begin{proposition}\label{key}
Let the assumptions of   Theorem \ref{th} be satisfied for some  $k\ge0$.
Let $j\in\{1,\cdots, m\}$.
If $k$ is even, then
\begin{equation*}
\lim_{\delta\to0}\delta^{-\frac{k+1}2}\int_{A_j}u_{\delta,V}d\mu=\frac1{\sqrt2}\int_{A_j\cap\partial D}\sqrt{(n\cdot an)}
V^{-\frac{k+1}2}\tilde L^{\frac k2}\mu d\sigma.
\end{equation*}
If $k$ is odd, then
\begin{equation*}
\lim_{\delta\to0}\delta^{-\frac{k+1}2}\int_{A_j}u_{\delta,V}d\mu=\frac12\int_{A_j\cap\partial D}V^{-\frac{k+1}2}a\nabla(\tilde L^{\frac{k-1}2}\mu)\cdot nd\sigma.
\end{equation*}
\end{proposition}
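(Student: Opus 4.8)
\emph{Proof proposal.} The plan is to reduce the claim to a one–dimensional boundary–layer computation by iterating Green's identity, peeling off two orders of vanishing of $\mu$ per step. First, localize near $\partial D$: since $\mu$ has a density on $D^\epsilon$ and, by Lemma~\ref{expdecay}, $\sup_{D\setminus D^\epsilon}u_{\delta,V}=O(e^{-c\delta^{-1/2}})$, one has $\int_{A_j}u_{\delta,V}\,d\mu=\int_{A_j\cap D^\epsilon}u_{\delta,V}(x)\mu(x)\,dx+O(e^{-c\delta^{-1/2}})$. Introduce boundary–normal coordinates $x\leftrightarrow(\pi,\rho)$, $\pi\in\partial D$, $\rho=\mathrm{dist}(x,\partial D)\in[0,\epsilon)$, valid for $\epsilon$ small because $\partial D\in C^{2,\alpha}$; write $J(\pi,\rho)$ for the Jacobian, with $J(\pi,0)\equiv1$. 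Fix $\chi\in C^\infty(\bar D^\epsilon)$, $0\le\chi\le1$, whose restriction to $\partial D$ approximates $1_{A_j\cap\partial D}$; by a standard squeeze argument (using that $e_0^k$ has a continuous density and that the partition may be chosen so that $\sigma(\partial(A_j\cap\partial D))=0$) it suffices to compute $\lim_{\delta\to0}\delta^{-(k+1)/2}\int_{D^\epsilon}u_{\delta,V}\,\chi\mu\,dx$ for every such $\chi$ and then let $\chi|_{\partial D}\to 1_{A_j\cap\partial D}$.

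\emph{Green's identity iteration.} Since $\delta Lu_{\delta,V}=Vu_{\delta,V}$, Green's identity on $D^\epsilon$ gives, for any $v\in C^2(\bar D^\epsilon)$, $\int_{D^\epsilon}(Vu_{\delta,V})v\,dx=\delta\int_{D^\epsilon}u_{\delta,V}\tilde Lv\,dx+\delta\,\mathcal B_\delta(v)$, where $\mathcal B_\delta(v)$ is a sum of surface integrals over $\partial D$ and $\{\rho=\epsilon\}$, linear in $v,\nabla v,u_{\delta,V},\nabla u_{\delta,V}$. On $\{\rho=\epsilon\}$ one bounds $u_{\delta,V}$ and $\nabla u_{\delta,V}$ by Lemma~\ref{expdecay} together with interior Schauder estimates applied to $Lu_{\delta,V}=\delta^{-1}Vu_{\delta,V}$ (the factor $\delta^{-1}$ being swamped by $e^{-c\delta^{-1/2}}$), so that part of $\mathcal B_\delta(v)$ is $O(e^{-c\delta^{-1/2}})$; the $\partial D$ part vanishes once $v$, hence also $\nabla v$, vanishes on $\partial D$, i.e.\ once $v$ vanishes to order $\ge2$ there. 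Applying this with $v=h/V$, $h$ vanishing to order $\ge2$ on $\partial D$, yields $\int_{D^\epsilon}u_{\delta,V}h\,dx=\delta\int_{D^\epsilon}u_{\delta,V}\tilde L(h/V)\,dx+O(e^{-c\delta^{-1/2}})$. Starting from $h_0=\chi\mu$ (which vanishes to order $k$ on $\partial D$) and setting $h_{i+1}=\tilde L(h_i/V)$ (which vanishes to order $k-2(i+1)$ on $\partial D$ and lies in $C^{\,k-2(i+1)}(\bar D^\epsilon)$ by the regularity hypotheses on $\mu,a,b,V$), one may iterate $m:=\lfloor k/2\rfloor$ times — at each step $i\le m$ the test function $h_{i-1}/V$ vanishes to order $k-2(i-1)\ge2$ — to obtain $\int_{D^\epsilon}u_{\delta,V}\chi\mu\,dx=\delta^m\int_{D^\epsilon}u_{\delta,V}h_m\,dx+O(e^{-c\delta^{-1/2}})$, with $h_m$ vanishing on $\partial D$ to order $\nu:=k-2m\in\{0,1\}$.

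\emph{Identification of the leading coefficient and the base case.} The leading normal Taylor coefficient of $h_m$ on $\partial D$ must now be related to $\tilde L^m\mu$. The key observation is: if $g\in C^2$ vanishes to order $p\ge2$ on $\partial D$, then $\partial_\rho^{\,p-2}(\tilde Lg)|_{\partial D}=\tfrac12(n\cdot an)\,\partial_\rho^{\,p}g|_{\partial D}$, because in boundary–normal coordinates the mixed and purely tangential second–order terms of $\tilde L$, and all its terms of order $\le1$, act on $g$ only through tangential derivatives of $\partial_\rho^{\,j}g$ with $j\le p-1$, each of which vanishes identically on $\partial D$; moreover dividing by $V>0$ divides that coefficient by $V$ on $\partial D$, and the factor $\chi|_{\partial D}$ (the leading coefficient of $\chi\mu$) rides inertly through the recursion. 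An induction then gives $\partial_\rho^{\,\nu}h_m|_{\partial D}=\chi|_{\partial D}\,V^{-m}\,\partial_\rho^{\,\nu}(\tilde L^m\mu)|_{\partial D}$; that is, $h_m|_{\partial D}=\chi|_{\partial D}V^{-k/2}\tilde L^{k/2}\mu|_{\partial D}$ when $k$ is even, and, using $a\nabla g\cdot n=(n\cdot an)\partial_\rho g$ for $g$ vanishing on $\partial D$, $a\nabla h_m\cdot n|_{\partial D}=\chi|_{\partial D}V^{-(k-1)/2}\,a\nabla(\tilde L^{(k-1)/2}\mu)\cdot n|_{\partial D}$ when $k$ is odd. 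For the base case one shows that in the rescaled variable $r=\rho\,\delta^{-1/2}$, $u_{\delta,V}(\pi,\delta^{1/2}r)\to w(\pi,r):=\exp\bigl(-r\sqrt{2V(\pi)/(n\cdot an)(\pi)}\bigr)$, the decaying solution of $\tfrac12(n\cdot an)(\pi)w_{rr}=V(\pi)w$, $w(\pi,0)=1$, uniformly on compact subsets of $\partial D\times[0,\infty)$. Passing to boundary–normal coordinates in $\int_{D^\epsilon}u_{\delta,V}h_m\,dx$, using $h_m(\pi,\rho)=\rho^{\nu}\partial_\rho^{\,\nu}h_m(\pi,0)/\nu!+o(\rho^{\nu})$ and $J(\pi,\rho)=1+o(1)$ as $\rho\to0$, substituting $\rho=\delta^{1/2}r$, and invoking dominated convergence via the bound $u_{\delta,V}(\pi,\delta^{1/2}r)\le c\,e^{-r/c}$ of Lemma~\ref{expdecay}, one gets, since $\int_0^\infty w(\pi,r)r^{\nu}\,dr=\nu!\,((n\cdot an)(\pi)/(2V(\pi)))^{(\nu+1)/2}$,
\begin{equation*}
\int_{D^\epsilon}u_{\delta,V}h_m\,dx\sim\delta^{(\nu+1)/2}\int_{\partial D}\frac{\partial_\rho^{\,\nu}h_m(\pi,0)}{\nu!}\Bigl(\frac{(n\cdot an)(\pi)}{2V(\pi)}\Bigr)^{(\nu+1)/2}d\sigma.
\end{equation*}
Multiplying by $\delta^m$ turns $\delta^{(\nu+1)/2}$ into $\delta^{(k+1)/2}$; substituting the formulas above (and $\tfrac1{\sqrt2}\sqrt{n\cdot an}$ appears in the even case, $\tfrac12$ in the odd case) and letting $\chi|_{\partial D}\to1_{A_j\cap\partial D}$ produces exactly the two asserted limits.

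\emph{Main obstacle.} The delicate ingredient is the boundary–layer convergence $u_{\delta,V}(\pi,\delta^{1/2}r)\to w(\pi,r)$, with enough uniformity to be integrated against the dominating function, since near $\partial D$ the coordinate change and the rescaling both interact with the boundary, and the first–order and tangential parts of $\delta L$, formally $O(\delta^{1/2})$ and $O(\delta)$ in the rescaled variables, must be shown not to affect the limit. I would prove it by a barrier argument: for each small $\eta>0$, the functions $(1\pm\eta)\exp\bigl(-r\sqrt{2(V(\pi)\mp\eta)/((n\cdot an)(\pi)\pm\eta)}\bigr)$, read back in the original variable, are super-/sub-solutions of $\delta L-V$ in a thin one–sided collar of $\partial D$ for all small $\delta$ and satisfy the correct one–sided inequalities on $\partial D$ (where $u_{\delta,V}=1$) and on the inner face of the collar (again by Lemma~\ref{expdecay}); the maximum principle sandwiches $u_{\delta,V}$, and one lets $\delta\to0$ and then $\eta\to0$. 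Alternatively, a compactness argument works: in the rescaled variables $u_{\delta,V}$ is bounded and, by interior Schauder estimates for the rescaled operator, precompact in $C^0_{\mathrm{loc}}$, and every subsequential limit solves the limiting ODE with boundary value $1$ and decay at infinity, hence equals $w$.
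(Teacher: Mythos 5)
Your proof is correct -- the exponents and the constants $\tfrac1{\sqrt2}$ and $\tfrac12$ all check out in both parities -- but it follows a genuinely different route from the paper's. The paper first globalizes rather than localizes: it replaces $\mu$ on $A_j$ by a density $\mu_0$ smooth on all of $\bar D$ with the same vanishing on $\partial D$, and integrates by parts over all of $D$, so no inner-face terms ever appear. More importantly, it treats the two parities by different mechanisms and never needs the full boundary-layer profile. For odd $k$ the paper's computation is essentially free: after the preliminary integrations by parts, one further application of Green's identity produces a surface term that is already the asserted limit, while the remaining volume term $\int_D u_{\delta,V}\,\tilde L(\cdots)\,dx$ tends to $0$ simply because $u_{\delta,V}\to0$ boundedly pointwise (Lemma \ref{expdecay}); no boundary-layer analysis is required at all. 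For even $k$ the paper introduces an auxiliary function $w$ with $\tilde L(w/V)=0$ and prescribed boundary values, which annihilates the volume term exactly and leaves $-\tfrac{\delta^{1/2}}2\int_{\partial D}(\cdots)\,a\nabla u_{\delta,V}\cdot n\,d\sigma$, evaluated via Lemma \ref{boundary} ($\delta^{1/2}\,n\cdot a\nabla u_{\delta,V}\to-\sqrt{2V(n\cdot an)}$ on $\partial D$); this requires a separate maximum-principle argument that $\delta^{-1/2}\int_D u_{\delta,V}\,dx$ stays bounded. Your unified approach instead funnels both parities into the rescaled profile $u_{\delta,V}(\pi,\delta^{1/2}r)\to\exp\bigl(-r\sqrt{2V/(n\cdot an)}\bigr)$, which is strictly stronger than Lemma \ref{boundary} (its integrated rather than differentiated form) and is the one ingredient you do not fully prove; your barrier and compactness sketches are both viable and of essentially the same difficulty as the paper's proof of Lemma \ref{boundary} (localization to constant coefficients plus a one-dimensional ODE), with the caveat that in the barrier comparison you must add or subtract an exponentially small constant to obtain the correct ordering on the inner face $\{\rho=\epsilon_0\}$, where Lemma \ref{expdecay} alone does not order $u_{\delta,V}$ against your candidate super- and sub-solutions. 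What your route buys is uniformity across parities and avoidance of both the auxiliary function $w$ and the boundedness argument for $\delta^{-1/2}\int_D u_{\delta,V}\,dx$; what the paper's route buys is that the odd case costs almost nothing and the even case needs only the normal derivative of $u_{\delta,V}$ at the boundary. Both treatments are equally informal at the same spot, namely the passage from smooth densities or cutoffs to the indicator of $A_j$.
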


\begin{lemma}\label{close}

Let $\mu$ satisfy   the assumptions in  Theorem \ref{th} for some $k\ge0$.
Let $j\in\{1,\cdots, m\}$ be such that $e_0^k(A_j\cap\partial D)>0$.  Then
\begin{equation*}
\lim_{\delta\to0}\mathcal{P}^\delta(Y_1(\tau_{D,1})\in
A_j\cap\partial D|Y_1(0)\in A_j,\tau_{D,1}<J_1)=1.
\end{equation*}
\end{lemma}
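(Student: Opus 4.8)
The plan is to show that, conditioned on $\{Y_1(0)\in A_j,\ \tau_{D,1}<J_1\}$, the exit point $Y_1(\tau_{D,1})$ lies in $A_j\cap\partial D$ with probability tending to $1$. The mechanism is that, in the small-diffusion limit, a $\delta L$-diffusion that manages to reach $\partial D$ before the exponential killing clock $J_1$ rings must do so very quickly (on a time scale of order $\delta^{1/2}$) and hence from a starting point very close to the boundary; and from such a point the diffusion cannot travel a macroscopic distance before exiting. So the exit point is forced to be near the starting point, which lies in $A_j$.

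First I would set up the decomposition according to how close $Y_1(0)$ is to $\partial D$. Fix $\eta>0$ small (to be sent to $0$ at the end) and split $A_j$ into the part $A_j\cap \overline{D^\eta}$ near the boundary and the part $A_j\setminus D^\eta$ at distance $\ge\eta$ from $\partial D$. By Lemma~\ref{expdecay} (applied with $A_{m+1}$ replaced by $A_j\setminus D^\eta$, the same argument giving \eqref{expdecayprob}), $\mathcal{P}^\delta(Y_1(0)\in A_j\setminus D^\eta,\ \tau_{D,1}<J_1)\le c_1\exp(-\eta/(c_1\delta^{1/2}))$, which is exponentially small in $\delta^{-1/2}$. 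On the other hand, Proposition~\ref{key} shows that the denominator $\mathcal{P}^\delta(Y_1(0)\in A_j,\ \tau_{D,1}<J_1)=\int_{A_j}u_{\delta,V}\,d\mu$ is of order $\delta^{(k+1)/2}$, with a strictly positive limiting constant precisely because $e_0^k(A_j\cap\partial D)>0$ — this is where the hypothesis on $j$ is used. Hence the conditional probability that $Y_1(0)\in A_j\setminus D^\eta$ is $O(\delta^{-(k+1)/2}\exp(-\eta/(c_1\delta^{1/2})))\to 0$. So with conditional probability $\to 1$ we may assume $Y_1(0)=x\in A_j\cap D^\eta$.

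Next I would estimate, for such $x$, the probability that $Y_1$ exits $D$ outside of $A_j$. Since $A_j$ is a fixed set with $A_j\cap\partial D$ having nonempty relative interior and $A_j$ connected with nonempty interior, there is a constant $\rho=\rho(A_j)>0$ such that any boundary point within distance $\rho$ of $x\in A_j\cap\partial D$ still lies in $A_j\cap\partial D$; more carefully, one chooses $\eta$ and then $\rho$ so that the set of boundary points reachable from $A_j\cap D^\eta$ by a path of length $<\rho$ staying in $\bar D$ is contained in $A_j\cap\partial D$, except possibly near a measure-zero "seam'' which I would handle by shrinking $A_1$ slightly or by noting exit distributions have densities. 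Then
\[
\{Y_1(\tau_{D,1})\notin A_j\cap\partial D\}\subset\{\max_{0\le s\le \tau_{D,1}}|Y_1(s)-x|\ge\rho\}.
\]
Conditioning further on whether $\tau_{D,1}\le t$ with the choice $t=t_\delta:=\delta^{-1/2}\eta$ (as in Lemma~\ref{expdecay}): on $\{\tau_{D,1}>t_\delta\}$ the killing factor $\exp(-\int_0^{t_\delta}V(Y_1)ds)\le\exp(-t_\delta V_{\min})$ is exponentially small, so this contributes negligibly to the (already order-$\delta^{(k+1)/2}$) numerator of the conditional probability; and on $\{\tau_{D,1}\le t_\delta\}$ the Gaussian-type bound \cite[Theorem 2.2-ii]{P95} used in Lemma~\ref{expdecay} gives
\[
\mathcal{P}^\delta\big(\max_{0\le s\le t_\delta}|Y_1(s)-x|\ge\rho\big)\le 2d\exp\!\Big(-\frac{(\rho-\delta B t_\delta)^2}{2d\delta A t_\delta}\Big)=2d\exp\!\Big(-\frac{(\rho-\delta^{1/2}B\eta)^2}{2dA\eta\,\delta^{1/2}}\Big),
\]
again exponentially small in $\delta^{-1/2}$. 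Dividing by the order-$\delta^{(k+1)/2}$ denominator from Proposition~\ref{key} still yields something that vanishes as $\delta\to0$.

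Combining the three pieces: the conditional probability of $\{Y_1(\tau_{D,1})\notin A_j\cap\partial D\}$ is bounded by a sum of terms each of the form (polynomial in $\delta^{-1/2}$)$\times$(exponential in $-\delta^{-1/2}$), hence tends to $0$, proving the lemma. The main obstacle is the geometric/topological step of choosing $\eta$ and $\rho$ so that short excursions from $A_j\cap D^\eta$ cannot reach $\partial D\setminus A_j$ — one must rule out the diffusion sneaking around through the interior of $D$ to a far-away boundar piece — but since a path of length $<\rho$ starting within $\eta$ of $A_j\cap\partial D$ is trapped in a small neighborhood of $A_j\cap\partial D$ in $\bar D$, and $A_j\cap\partial D$ has nonempty relative interior, this is a routine consequence of the smoothness of $\partial D$ once $\eta,\rho$ are taken small enough relative to the geometry of $A_j$. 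A minor technical point is the behavior on the common boundary of adjacent $A_j$'s, which is handled by the absolute continuity of the exit distribution $e_{\delta,x}$ with respect to $\sigma$.
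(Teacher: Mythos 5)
Your mechanism is the right one, and most of your estimates track the paper's (the paper packages the ``must exit before time $t$ or be killed'' and ``cannot travel far by time $t$'' steps into a single Feynman--Kac quantity $u_{\delta,V,j}(x)=\mathcal{E}^\delta\bigl(1_{\partial D-A_j}(Y_1(\tau_{D,1}))\exp(-\int_0^{\tau_{D,1}}V(Y_1(s))ds)\,\big|\,Y_1(0)=x\bigr)$, which satisfies $u_{\delta,V,j}(x)\le c\exp(-\mathrm{dist}(x,\partial D-A_j)/(c\delta^{1/2}))$ by the argument of Lemma~\ref{expdecay}). But there is a genuine gap at the ``seam,'' and neither of your suggested fixes closes it. For starting points $x\in A_j\cap D^\eta$ within distance $\rho$ of $\partial A_j$, the inclusion $\{Y_1(\tau_{D,1})\notin A_j\cap\partial D\}\subset\{\max_{0\le s\le\tau_{D,1}}|Y_1(s)-x|\ge\rho\}$ simply fails, and from such $x$ the probability of exiting through $\partial D\setminus A_j$ (given exit before the jump) is of order one, not exponentially small. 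The set of such starting points is a tube of positive $\mu$-measure, and its contribution to the numerator is $\int_{\mathrm{tube}}u_{\delta,V}\,d\mu$, which by Proposition~\ref{key} is of the \emph{same} order $\delta^{(k+1)/2}$ as the denominator --- not of the form (polynomial in $\delta^{-1/2}$) times (exponential in $-\delta^{-1/2}$) as your concluding sentence asserts. ``Shrinking $A_1$ slightly'' changes the statement to be proved, and the absolute continuity of $e_{\delta,x}$ for fixed $\delta$ gives no uniformity in $\delta$.

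The correct repair is a two-parameter limit, which is exactly what the paper does: set $A_j^{1/N}=\{x\in A_j:\mathrm{dist}(x,\partial A_j)<1/N\}$, bound the contribution of $A_j\setminus A_j^{1/N}$ by the exponential estimate (there $\mathrm{dist}(x,\partial D\setminus A_j)\ge 1/N$), and bound the contribution of $A_j^{1/N}$ by $\int_{A_j^{1/N}}u_{\delta,V}\,d\mu$, to which Proposition~\ref{key} applies with $A_j$ replaced by $A_j^{1/N}$. Dividing by $\int_{A_j}u_{\delta,V}\,d\mu$ gives $\limsup_{\delta\to0}\mathcal{P}^\delta(Y_1(\tau_{D,1})\in\partial D\setminus A_j\,|\,Y_1(0)\in A_j,\tau_{D,1}<J_1)\le e_0^k(A_j^{1/N}\cap\partial D)/e_0^k(A_j\cap\partial D)$, and only \emph{then} does one let $N\to\infty$, using that $e_0^k$ has a density with respect to $\sigma$ so that the seam $\partial A_j\cap\partial D$ is $e_0^k$-null. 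Your write-up is missing this last, essential step.
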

\begin{proof}
Define
$$
u_{\delta,V,j}(x)=\mathcal{E}^\delta(1_{\partial D-A_j}(Y_1(\tau_D))\exp(-\int_0^{\tau_{D,1}}V(Y_1(t)))|Y_1(0)=x).
$$
An argument just like that used in the proof of Lemma \ref{intu} shows that
\begin{equation}\label{intuj}
\mathcal{P}^\delta(Y_1(0)\in A_j,\tau_{D,1}<J_1, Y_1(\tau_{D,1})\in \partial D-A_j)=\int_{A_j}u_{\delta,V,j}~d\mu.
\end{equation}
An argument just like that used in the proof of Lemma \ref{expdecay}
shows that
\begin{equation}\label{decayj}
u_{\delta, V,j}(x)\le c\exp(-\frac{\text{dist}(x,\partial D-A_j)}{c\delta^{\frac12}}).
\end{equation}
Let $N>0$ be a positive integer and let $A_j^\frac1N=\{x\in A_j:\text{dist}(x,\partial A_j)<\frac1N\}$.
By Lemma \ref{intu} and \eqref{intuj}, we have
\begin{equation}\label{fraction}
\begin{aligned}
&\mathcal{P}^\delta(Y_1(\tau_{D,1})\in
\partial D-A_j|Y_1(0)\in A_j,\tau_{D,1}<J_1)=\\
&\frac{\int_{A_j^\frac1N}u_{\delta, V,j}d\mu+\int_{A_j-A_j^\frac1N}u_{\delta,V,j}d\mu}
{\int_{A_{j}}u_{\delta, V}d\mu}
\end{aligned}
\end{equation}
Proposition \ref{key}  of course also holds  with $A_j$ replaced by $A_j^\frac1N$.
Using the fact that $u_{\delta,V,j}\le u_{\delta,V}$,
applying Proposition
\ref{key} with $A_j$ and with $A_j^\frac1N$,
and using \eqref{decayj}, we obtain
\begin{equation}\label{final3}
\limsup_{\delta\to0}\mathcal{P}^\delta(Y_1(\tau_{D,1})\in
\partial D-A_j|Y_1(0)\in A_j,\tau_{D,1}<J_1)\le\frac{e_0^k(A_j^\frac1N\cap\partial D)}{e_0^k(A_j\cap\partial D)}.
\end{equation}
Letting $N\to\infty$ completes the proof of the lemma.
\end{proof}

We can now prove \eqref{toshow}, which will complete the proof of Theorem \ref{th}.
 Recall that by assumption $e_0^k(A_1)>0$. Thus, by
 Lemmas \ref{intu} and \ref{close}, it follows
that
\begin{equation}\label{using1}
\mathcal{P}^\delta(Y_1(0)\in A_{1}, Y_1(\tau_D)\in A_{1}\cap\partial D,\tau_{D,1}<J_1)=\big(1+o(1)\big)\int_{A_{1}}u_{\delta,V}d\mu,
\ \text{as}\ \delta\to0.
\end{equation}
By Lemmas \ref{intu} and \ref{close}  and Proposition \ref{key}, it
follows that
\begin{equation}\label{littleo}
\mathcal{P}^\delta(Y_1(0)\in A_j,Y_1(\tau_D)\in  A_1\cap\partial D,\tau_{D,1}<J_1)=o(\delta^{\frac{k+1}2}),\ \text{for}\ j\in\{2,\cdots, m\}.
\end{equation}
Using \eqref{using1}, \eqref{littleo} and  Lemma \ref{expdecay},  we have
\begin{equation}\label{beginning}
\begin{aligned}
&e_\delta(A_1\cap\partial D)=\mathcal{P}^\delta(Y_1(\tau_{D,1})\in A_1\cap\partial D|\tau_{D,1}<J_1)=\\
&\frac{\mathcal{P}^\delta(Y_1(\tau_D)\in  A_1\cap\partial D,\tau_{D,1}<J_1)}{\mathcal{P}^\delta(\tau_{D,1}<J_1)}=\\
&\frac{\sum_{j=1}^{m+1}\mathcal{P}^\delta(Y_1(0)\in A_j,Y_1(\tau_D)\in  A_1\cap\partial D,\tau_{D,1}<J_1)}
{\sum_{j=1}^{m+1}\mathcal{P}^\delta(Y_1(0)\in A_j,\tau_{D,1}<J_1)}=\\
&\frac{\big(1+o(1)\big)\int_{A_1}u_{\delta,V}~d\mu+o(\delta^\frac{k+1}2)}
{\sum_{j=1}^m \int_{A_j}u_{\delta,V}~d\mu +o(\delta^\frac{k+1}2)},\ \text{as}\ \delta\to0.
\end{aligned}
\end{equation}
Now from \eqref{beginning} and Proposition \ref{key}, we have
\begin{equation}
\lim_{\delta\to0}e_\delta(A_1\cap\partial D)= e_0^k(A_1\cap \partial D).
\end{equation}

\hfill $\square$

\section{Proof of Proposition \ref{key}}
For the proof of the proposition in the case of even $k$, we will need the following lemma.
\begin{lemma}\label{boundary}
Let $n$ denote the unit inward normal to $D$ at $\partial D$. One has
\begin{equation*}
\lim_{\delta\to0}\delta^\frac12(n\cdot a\nabla u_{\delta,V})(x)=-\sqrt{2V(x)(n\cdot an)(x)},\ \text{uniformly over}\ x\in\partial D.
\end{equation*}
\end{lemma}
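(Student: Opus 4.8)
The statement concerns a boundary layer of width $\delta^{1/2}$, so the plan is to blow up $u_{\delta,V}$ at the boundary on that scale and pass to the limit by elliptic estimates. Two reductions streamline matters. First, since the Dirichlet data is smooth, $u_{\delta,V}\in C^{2,\alpha}(\bar D)$, so the normal derivative is a genuine continuous function on $\partial D$. Second, and crucially, because $u_{\delta,V}\equiv 1$ on $\partial D$ its tangential gradient along $\partial D$ vanishes; hence at $x_0\in\partial D$ the gradient is purely normal, $\nabla u_{\delta,V}(x_0)=(\partial_n u_{\delta,V})(x_0)\,n(x_0)$, and therefore
\begin{equation*}
(n\cdot a\nabla u_{\delta,V})(x_0)=(n\cdot an)(x_0)\,(\partial_n u_{\delta,V})(x_0).
\end{equation*}
So the lemma reduces to proving $\delta^{1/2}(\partial_n u_{\delta,V})(x_0)\to-\sqrt{2V(x_0)/(n\cdot an)(x_0)}$, uniformly in $x_0$.

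Fix $x_0\in\partial D$. The distance function $\rho(x)=\mathrm{dist}(x,\partial D)$ is $C^{2,\alpha}$ in a one-sided neighborhood of $\partial D$, with $\nabla\rho=n$ on $\partial D$; pick a $C^{2,\alpha}$ diffeomorphism $\Phi$ from a neighborhood of $x_0$ onto a neighborhood of $0$ in $R^d$, with $\Phi(x_0)=0$ and last component $\rho$, so that $D$ becomes locally $\{y_d>0\}$. Then $\tilde u_\delta:=u_{\delta,V}\circ\Phi^{-1}$ solves $\delta\tilde L\tilde u_\delta=\tilde V\tilde u_\delta$, $\tilde u_\delta\equiv 1$ on $\{y_d=0\}$, where $\tilde L$ is uniformly elliptic with $C^\alpha$ coefficients, $\tilde V(0)=V(x_0)$, and $\tilde a_{dd}(0)=\nabla\rho(x_0)\cdot a(x_0)\nabla\rho(x_0)=(n\cdot an)(x_0)$. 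Rescale by $w_\delta(z):=\tilde u_\delta(\delta^{1/2}z)$ on a half-ball of radius $\sim\delta^{-1/2}$; then
\begin{equation*}
\tfrac12\sum_{i,j}\tilde a_{ij}(\delta^{1/2}z)\,\partial_{z_iz_j}w_\delta+\delta^{1/2}\sum_i\tilde b_i(\delta^{1/2}z)\,\partial_{z_i}w_\delta=\tilde V(\delta^{1/2}z)\,w_\delta,\qquad w_\delta\equiv 1\ \text{on}\ \{z_d=0\},
\end{equation*}
and $0\le w_\delta\le 1$. This equation is uniformly elliptic in $z$ with coefficients bounded in $C^\alpha$ (the second-order coefficients in fact converge to the constants $\tilde a_{ij}(0)$, and the first-order term is $O(\delta^{1/2})$), so the boundary Schauder estimate gives a uniform $C^{2,\alpha}$ bound for $w_\delta$ on each fixed half-ball. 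Along a subsequence $w_\delta\to w_0$ in $C^2_{\mathrm{loc}}$ of $\{z_d\ge 0\}$, with $w_0$ solving the constant-coefficient problem $\tfrac12\sum_{i,j}\tilde a_{ij}(0)\partial_{z_iz_j}w_0=V(x_0)w_0$ in $\{z_d>0\}$, $w_0\equiv 1$ on $\{z_d=0\}$, $0\le w_0\le 1$. Moreover, rewriting the bound of Lemma~\ref{expdecay} in the flattened, rescaled variables (where $\rho$ is exactly $y_d=\delta^{1/2}z_d$) gives $w_\delta(z)\le c\,e^{-z_d/c}$ uniformly in $\delta$, hence $w_0(z',z_d)\to 0$ as $z_d\to\infty$ uniformly in $z'$.

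It remains to identify $w_0$. A bounded solution of the half-space problem that tends to $0$ at $z_d=\infty$ uniformly in $z'$ is unique: the difference $h$ of two such solutions solves $\big(\tfrac12\sum_{i,j}\tilde a_{ij}(0)\partial_{z_iz_j}-V(x_0)\big)h=0$, vanishes on $\{z_d=0\}$ and at $z_d=\infty$, and is bounded, so since $V(x_0)>0$ a maximum-principle argument on the slabs $\{0<z_d<M\}$ (of Phragm\'en--Lindel\"of type, to handle unboundedness in $z'$, using barriers $\varepsilon+\eta\cosh(\lambda z_1)$ with $\lambda$ small) forces $h\equiv 0$. By translation invariance in $z'$, uniqueness makes $w_0$ a function of $z_d$ alone, so $\tfrac12\tilde a_{dd}(0)w_0''=V(x_0)w_0$, $w_0(0)=1$, $w_0(\infty)=0$; thus $w_0(z_d)=\exp\big(-z_d\sqrt{2V(x_0)/(n\cdot an)(x_0)}\big)$ and $w_0'(0)=-\sqrt{2V(x_0)/(n\cdot an)(x_0)}$. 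As this limit is independent of the subsequence, the whole family converges, so $\partial_{z_d}w_\delta(0)\to w_0'(0)$. Finally, since $\tilde u_\delta$ (like $u_{\delta,V}$) is constant on the flattened boundary, the chain rule gives $\partial_{y_d}\tilde u_\delta(0)=(\partial_n u_{\delta,V})(x_0)$, so $\delta^{1/2}(\partial_n u_{\delta,V})(x_0)=\partial_{z_d}w_\delta(0)\to w_0'(0)$, and combined with the first paragraph this yields $\delta^{1/2}(n\cdot a\nabla u_{\delta,V})(x_0)\to-\sqrt{2V(x_0)(n\cdot an)(x_0)}$. Uniformity over $\partial D$ follows from a contradiction/compactness argument: a would-be bad sequence $x_n\to x_*$, $\delta_n\to 0$ is run through the same blow-up centered at $\Phi(x_n)$ inside one chart around $x_*$; the coefficients freeze at $\Phi(x_*)$, the limit of the rescaled normal derivatives is $-\sqrt{2V(x_*)(n\cdot an)(x_*)}$, and continuity of $V$ and $a$ gives the contradiction.

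I expect the main obstacles to be, first, the bookkeeping needed to guarantee that the boundary Schauder estimates apply with $\delta$-uniform constants after flattening (the coefficients must remain uniformly $C^\alpha$ and the ellipticity uniform), and second, the identification of the blow-up limit: one must know $w_0$ genuinely decays at $z_d=\infty$---this is exactly where Lemma~\ref{expdecay} is indispensable, since it selects the decaying branch of the limiting ODE---and one must establish uniqueness for the half-space problem whose domain is unbounded in the tangential directions. A more hands-on alternative, which delivers the uniform two-sided bound on $\partial_n u_{\delta,V}$ directly, is to compare $u_{\delta,V}$ with explicit super- and sub-barriers of WKB form $\exp(-\Phi/\delta^{1/2})$, where $\Phi$ solves the eikonal equation $\tfrac12\nabla\Phi\cdot a\nabla\Phi=V$ in a neighborhood of $\partial D$; the cost there is constructing and controlling $\Phi$ near the boundary.
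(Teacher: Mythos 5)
Your argument is correct, but it takes a genuinely different route from the paper. You share with the paper the two essential observations: that $u_{\delta,V}\equiv 1$ on $\partial D$ forces $\nabla u_{\delta,V}$ to be purely normal there, so that $(n\cdot a\nabla u_{\delta,V})=(n\cdot an)\,\partial_n u_{\delta,V}$, and that the answer ultimately comes from the one-dimensional constant-coefficient equation $\tfrac12(n\cdot an)w''=Vw$ with its decaying exponential solution. Where you diverge is in how the problem is reduced to that model. The paper invokes the localization scheme of \cite{P09}: it declares that it suffices to treat boundaries of constant curvature with frozen coefficients, works on the explicit model domains $\{0<x_1<1\}$ and the annuli $A_{R/2,R}$, $A_{R,2R}$, observes that by symmetry $u_{\delta,V}$ there depends on one variable only, and solves the resulting two-point boundary value problem explicitly. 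You instead flatten the boundary with boundary-distance coordinates, blow up at scale $\delta^{1/2}$, extract a limit by uniform boundary Schauder estimates, and identify the limit through a Liouville-type uniqueness statement for the half-space problem (using Lemma~\ref{expdecay} to force decay and translation invariance in the tangential variables to reduce to the ODE). Your approach is self-contained where the paper leans on the earlier reference, and it delivers the uniformity over $\partial D$ cleanly by a compactness argument; the cost is the extra analytic machinery --- in particular the Phragm\'en--Lindel\"of step, where your barrier $\varepsilon+\eta\cosh(\lambda z_1)$ controls growth only in the $z_1$ direction and should be replaced by something like $\varepsilon+\eta\sum_{i=1}^{d-1}\cosh(\lambda z_i)$ to close off a box in all tangential directions. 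With that small repair, and the routine diagonal/exhaustion bookkeeping for $C^2_{\mathrm{loc}}$ convergence on the half-space, the proof goes through.
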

\begin{proof}
The proof of the result in the case that $L=\frac12\Delta$ was given in \cite{P09}.
In the proof, it was shown that everything could be reduced to local considerations.
In particular, it was enough to prove that the above equation holds pointwise under the assumption that the boundary had constant curvature.
We can thus make the same assumptions here, and we can also assume that $L$ and $V$ have  constant coefficients.
More specifically, note that  $L$ is given in non-divergence by
\begin{equation}
\begin{aligned}
&L=\frac12\sum_{i,j=1}^da_{i,j}(x)\frac{\partial^2}{\partial x_i \partial x_j}+\frac12\sum_{i,j=1}^d \frac{\partial a_{i,j}}{\partial x_i}(x)
\frac{\partial }{\partial x_j}+\sum_{j=1}^db_j(x)\frac\partial{\partial x_j}\equiv\\
&\frac12\sum_{i,j=1}^da_{i,j}(x)\frac{\partial^2}{\partial x_i \partial x_j}+\sum_{i=1}^dB_j(x)\frac\partial{\partial x_j}.
\end{aligned}
\end{equation}
Thus for the proof we may assume that
$L=\frac12\sum_{i,j=1}^da_{i,j}\frac{\partial^2}{\partial x_i \partial x_j}+\sum_{i=1}^dB_j\frac\partial{\partial x_j}$,
where $a_{i,j}$ and $B_j$ are constant.
Similar to what was done in \cite{P09}, for zero curvature, we  take $D=\{x\in R^d:0<x_1<1\}$ and consider
$(n\cdot a\nabla u_{\delta,V})(0)$; for curvature $R>0$,
we assume that $D=A_{\frac R2,R}
\equiv\{x\in R^d:\frac R2<|x|<R\}$ and consider
$(n\cdot a\nabla u_{\delta,V})(x)$, for some $x$ with $|x|=R$;
and for negative curvature $-R<0$, we assume that
$D=A_{R,2R}\equiv\{x\in R^d:R<|x|<2R\}$
and consider $(n\cdot a\nabla u_{\delta,V})(x)$, for some $x$ with $|x|=R$.
We will consider the cases of zero curvature and positive curvature; the case of negative curvature
being handled similarly to the case of positive curvature.

We begin with the case of zero curvature.
Let $\{e_j\}_{j=1}^d$ denote the standard basis vectors.
Let $H_1$ denote the hyperplane $x_1=0$.
The interior unit normal to $D$ on $\partial D\cap H_1$ is constant and equal to $e_1$; that is,

$n=e_1$.
Let
 $y$ denote the projection of $n\cdot a$ onto $H_1$.
Then $y+(n\cdot an)e_1=n\cdot a$. Since $u_{\delta,V}=1$ on $H_1$, we have
\begin{equation}\label{boungrad}
\begin{aligned}
&(n\cdot a\nabla u_{\delta,V})(0)=\lim_{t\to0+}\frac{u_{\delta,V}(tn\cdot a)-u_{\delta,V}(0)}t=\lim_{t\to0+}\frac
{u_{\delta,V}(tn\cdot a)-u _{\delta,V}(ty)}t=\\
&\lim_{t\to0^+}\frac{u_{\delta,V}(ty+t(n\cdot an)e_1)-u_{\delta,V}(ty)}t=n\cdot an\lim_{t\to0^+}\frac{\partial u_{\delta,V}}{\partial x_1}(ty+s_t(n\cdot an)e_1)=\\
&(n\cdot an)(n\cdot \nabla u_{\delta,V})(0),
\end{aligned}
\end{equation}
where $0<s_t<t$. Since $u_{\delta,V}$ depends only on $x_1$ and since $n=e_1$, we can reduce the calculation
of $(n\nabla u_{\delta,V})(0)$   to a one-dimensional problem.
So we write $ u_{\delta,V}=u_{\delta,V}(x)$ with $0<x<1$. Now  $u$ solves
the constant coefficient equation $\frac12\delta a_{1,1}u_{\delta,V}''+\delta B_1u_{\delta,V}'-Vu=0$
with the boundary condition $u_{\delta,V}(0)=u_{\delta,V}(1)=1$.
The quantity $(n\cdot \nabla u_{\delta,V})(0)$ above is now given by
$u'_{\delta,V}(0)$.
One can solve this explicitly and check that
$\lim_{\delta\to0}\delta^\frac12u_{\delta,V}'(0)=-\sqrt{\frac{2V}{a_{1,1}}}$.
Substituting this in \eqref{boungrad} and noting that $a_{1,1}=n\cdot an$,
we obtain
$\lim_{\delta\to0}(n\cdot a\nabla u_{\delta,V})(0)=-\sqrt{2(n\cdot an)V}$.

Now we turn to the case that the curvature is $R>0$. We let $D=A_{\frac R2,R}$ and consider the boundary point
$Re_1$. We need to evaluate $\lim_{\delta\to0}(n\cdot a\nabla u_{\delta,V})(Re_1)$.
 We first reduce the calculation to the calculation of the normal derivative, similar to \eqref{boungrad}.
 Note that the inward unit normal $n=n(Re_1)$ at $Re_1$ satisfies $n=-e_1$.
For small $t>0$, let $z_t$ denote the point on $|x|=R$ which is closest to $Re_1+t n\cdot a$.
Define the vector $w_t$ by $z_t+w_t=Re_1+tn\cdot a$.
(Note that $Re_1$, $z_t$  and $w_t$ take on the roles played by   0, $ty$ and $t(n\cdot an)e_1$ respectively
in the case of zero curvature.)
Of course
$\lim_{t\to0^+}|z_t-Re_1|=0$. Since the curvature is positive, we have
$|w_t|<(n\cdot an)t$; however $\lim_{t\to0+}\frac{|w_t|}t=n\cdot an$.
Note also that the direction  $\frac{w_t}{|w_t|}$ of $w_t$ approaches the direction of $n$ as $t\to0^+$.
Thus, since $u_{\delta,V}=1$ on $|x|=R$,  we have
\begin{equation}\label{boungradcurv}
\begin{aligned}
&(n\cdot a\nabla u_{\delta, V})(Re_1)=\lim_{t\to0^+}\frac{u_{\delta,V}(Re_1+tn\cdot a)-u _{\delta,V}(Re_1)}t=\\
&\lim_{t\to0+}\frac{u _{\delta,V}(z_t+w_t)-u _{\delta,V}(z_t)}t
=\lim_{t\to0^+}\frac{|w_t|}t\frac{u _{\delta,V}(z_t+w_t)-u _{\delta,V}(z_t)}{|w_t|}=\\
&(n\cdot an)(n\cdot \nabla u_{\delta, V})(Re_1).
\end{aligned}
\end{equation}
We now consider $(n\cdot\nabla u_{\delta, V})(Re_1)$.
Let $(r,\theta)$ with $\theta\in S^{d-1}$ denote polar coordinates.
We rewrite the constant coefficient operator $L$ in polar form. Of course now the operator will no longer
have constant coefficients; however by the localization mentioned above, we may consider
instead the constant coefficient  operator obtained by
evaluating the  coefficients at $Re_1$.
Call the resulting operator $L$.
We have $L=\frac12(n\cdot an)\frac{d^2}{dr^2}+B\frac d{dr}+\text{terms involving differentiation with respect to}\ \theta
\ \text{and maybe also }\ r$,
where $B$ is a certain constant whose form is irrelevant for our purposes.
Now $u_{\delta, V}$ solves
 $\delta Lu_{\delta,V}-Vu_{\delta,V}=0$ for $\frac R2<r<R$, and
$u_{\delta,V}=1$ at $r=\frac R2$ and $r=R$. It follows that $u_{\delta, V}$ is a function of $r$ alone.
Thus $u_{\delta, V}$ satisfies the one-dimensional equation
$\frac12\delta (n\cdot an)u''_{\delta,V}+\delta Bu'_{\delta,V}-Vu_{\delta,V}=0$ for $\frac R2<r<R$
and $u(\frac R2)=u(R)=1$, and
$(n\cdot\nabla u_{\delta, V})(Re_1)$
becomes $-u'_{\delta, V}(R)$.
 We have thus reduced the problem
to the previous case of zero curvature, and conclude that
$\lim_{\delta\to0}(n\cdot a\nabla u_{\delta,V})(Re_1)=-\sqrt{2(n\cdot an)V}$.

\end{proof}

\it\noindent Proof of Proposition 1.\rm\
Let $\mu_0(\cdot)$ be an arbitrary probability measure on $\bar D$
which has a density $\mu_0(x)$ which
satisfies the same smoothness assumptions in $\bar D$ that the density $\mu$ satisfies in $D^\epsilon$, and which satisfies
the same vanishing conditions on $\partial D$ that the density $\mu$ satisfies there.
An easy argument then shows that to prove the proposition, it suffices to prove it with $A_j$ replaced by $\bar D$, $d\mu$ replaced
by $d\mu_0$ and $\mu(x)$ replaced by $\mu_0(x)$.
We will first prove the proposition for the case $k=1$, which is easier than the case $k=0$.  We then show how to go from the case
$k=1$ to the case $k=3$, from which it will be clear how to proceed for odd $k$.
After that we will prove the proposition for $k=0$ and then we show how to go
from  the case $k=0$ to the case  $k=2$, from which it will be clear how to proceed for even $k$.

In light of the above paragraph, we consider $\int_Du_{\delta, V}\mu_0dx$.
Since $k=1$, $\mu_0$ vanishes on $\partial D$, but $\nabla \mu_0$ does not vanish identically
on $\partial D$.
Using \eqref{u} and the fact that $\mu_0$ vanishes on $\partial D$,
and recalling that $n$ denotes the inward unit normal,
integration by parts gives
\begin{equation}\label{k=1}
\begin{aligned}
&\delta^{-1}\int_Du_{\delta, V}\mu_0dx=\int_DLu_{\delta, V}
\frac{\mu_0}Vdx=\\
&\int_Du_{\delta, V}
\tilde L\frac{\mu_0}Vdx+
\frac12\int_{\partial D}a\nabla(\frac{\mu_0}V)\cdot nd\sigma.
\end{aligned}
\end{equation}
(Note that by assumption, $\mu_0$ and $V$ are $C^2$-functions so there is no problem with the integration by parts.)
By Lemma \ref{expdecay}, $u_{\delta, V}$ converges to 0 boundedly pointwise on $D$.
Also, since $\mu_0$ vanishes on $\partial D$, we have $\nabla(\frac{\mu_0}V)\cdot n=\frac1V\nabla \mu_0\cdot n$
on $\partial D$. Thus, letting $\delta\to0$ in \eqref{k=1}, we obtain
\begin{equation*}
\lim_{\delta\to0}\delta^{-1}\int_Du_{\delta, V}\mu_0dx=
\frac12\int_{\partial D}V^{-1}a\nabla\mu_0\cdot nd\sigma.
\end{equation*}
We now turn to the case $k=3$.
In the case $k=3$, $\mu_0$ and all its derivatives up to order 2 vanish on $\partial D$;
in particular, the last term on the right hand side of \eqref{k=1} is 0.
Thus, using \eqref{u} again, integrating by parts and using the fact that the second order derivatives of $\mu_0$ vanish
on $\partial D$, we have from \eqref{k=1},
\begin{equation}\label{k=3}
\begin{aligned}
&\delta^{-2}\int_Du_{\delta,V}\mu_0dx=
\delta^{-1}\int_Du_{\delta, V}
\tilde L\frac{\mu_0}Vdx=
\int_D(Lu_{\delta,V})\frac1V\tilde L\frac{\mu_0}Vdx=\\
&\int_Du_{\delta,V}\tilde L\frac1V\tilde L\frac{\mu_0}Vdx+
\int_{\partial D}
a\nabla(\frac1V\tilde L\frac{\mu_0}V)\cdot nd\sigma.
\end{aligned}
\end{equation}
(Note that by assumption, $\mu_0$ and  $V$ are $C^4$-functions  and $a_{i,j}$ and $b_i$ are $C^3$-functions, so
there is no problem with the integration by parts.)
Using Lemma \ref{expdecay} again and the fact that $\mu_0$ and all its derivatives up to order 2 vanish on $\partial D$,
we obtain
$$
\lim_{\delta\to0}\delta^{-2}\int_Du_{\delta,V}\mu_0dx=\int_{\partial D}
V^{-2}a\nabla(\tilde L\mu_0)\cdot nd\sigma.
$$
The same technique is used repeatedly to handle larger values of odd $k$, the smoothness requirements in the statement of
Theorem \ref{th} being the smoothness required to implement the integration by parts.

Now we turn to the case $k=0$.
Let $w$ solve the equation
\begin{equation}\label{aux}
\begin{aligned}
&\tilde L\frac w{ V}=0\ \text{in}\ D;\\
&w=\mu_0\ \text{on}\ \partial D.
\end{aligned}
\end{equation}
Note that by the smoothness assumptions on $a,b,V,\mu_0$, it follows that $w$ is the solution to an
elliptic  equation with $C^\alpha(\bar D)$-coefficients and continuous boundary data.
Thus, $w\in C^{2,\alpha}(D)\cap C(\bar D)$.

We will show below that
\begin{equation}\label{diff}
\lim_{\delta\to0}\delta^{-\frac12}\int_Du_{\delta, V}(\mu_0-w)dx=0.
\end{equation}
Thus, it is enough to show  that
\begin{equation}\label{show}
\lim_{\delta\to0}\delta^{-\frac12}\int_{D}u_{\delta,V}wdx=\frac1{\sqrt{2}}\int_{\partial D}\sqrt{(n\cdot an)}\frac{\mu_0}{\sqrt{V}}d\sigma.
\end{equation}
Using \eqref{u} and \eqref{aux}, and integrating by parts, we have
\begin{equation}\label{k=0}
\begin{aligned}
&\delta^{-\frac12}\int_Du_{\delta, V}wdx=\delta^{\frac12}\int_D
(L u_{\delta, V})\frac wVdx=
-\frac{\delta^{\frac12}}2\int_{\partial D}\frac{\mu_0}V
a\nabla u_{\delta, V}\cdot n d\sigma,
\end{aligned}
\end{equation}
where we have used the fact that
\begin{equation*}\label{keycalc}
\frac12\int_{\partial D}a\nabla( \frac w V)\cdot nd\sigma-\int_{\partial D}\frac wVb\cdot nd\sigma=
\int_D \tilde L\frac wVdx=0
\end{equation*}
by \eqref{aux}.
(Actually, since $w$ is not necessarily $C^2$ up to the boundary, in the above integrals one should replace $D$ by
    $D-\bar D^\epsilon$  and $\partial D$ by $\partial (D-\bar D^\epsilon)$ and then let  $\epsilon\to0$.)
 Letting $\delta\to0$ in \eqref{k=0}, and using Lemma \ref{boundary}, we obtain
\eqref{show}.

It remains to prove \eqref{diff}.
By Lemma \ref{expdecay}, we have
\begin{equation}\label{diffbig}
|\delta^{-\frac12}\int_{D-D^\epsilon}u_{\delta, V}(\mu_0-w)dx|\le\sup_{x\in D}
(\mu_0(x)+w(x))|D|
\delta^{-\frac12}c\exp(-\frac{\epsilon}{c\delta^{\frac12}}).
\end{equation}
We also have
\begin{equation}\label{difflittle}
|\delta^{-\frac12}\int_{D^\epsilon}u_{\delta,V}(\mu_0-w)dx|\le\sup_{x\in D^\epsilon}
|\mu_0(x)-w(x)|
(\delta^{-\frac12}\int_Du_{\delta,V}dx).
\end{equation}
Now  \eqref{show} holds for every $\mu_0$ in a wide class; in particular,  it holds for $\mu_0$ which are uniformly positive on
$\bar D$. In such a case, it follows by the maximum principal that $w$ is uniformly positive on $\bar D$.
(The principal eigenvalue for $\tilde L$ coincides with that of $L$, and is consequently negative. Thus the generalized maximum
principal holds: $\tilde L v=0$ in $D$ and $v>0$ on $\partial D$ guarantees that $v>0$ on $\bar D$. Apply this with $v=\frac wV$.)
By considering \eqref{show} with such a uniformly positive $w$, it follows that  $\delta^{-\frac12}\int_Du_{\delta,V}dx$ is bounded as $\delta\to0$.
Using this, the proof of \eqref{diff} now follows from
\eqref{diffbig}, \eqref{difflittle} and the fact that $\lim_{\epsilon\to0}\sup_{x\in D^\epsilon}
|\mu_0(x)-w(x)|=0$.

We now turn to the case $k=2$. Since $\mu_0$ and all its derivatives up to order one vanish on $\partial D$, we can write
\eqref{k=1} as
\begin{equation}\label{reduction}
\delta^{-\frac32}\int_Du_{\delta, V}\mu_0dx=
\delta^{-\frac12}\int_Du_{\delta, V}
\tilde L\frac{\mu_0}Vdx.
\end{equation}
As with the case $k=0$, we define an auxiliary function $w$. Let $w$ solve the equation
\begin{equation}
\begin{aligned}\label{auxagain}
&\tilde L\frac wV=0\ \text{in}\ D;\\
&w=\tilde L\frac{\mu_0}V\ \text{on}\ \partial D.
\end{aligned}
\end{equation}
(By assumption, $\mu_0$ and its first order partial derivatives vanish on $\partial D$, but not all of its second order partial
derivatives vanish on $\partial D$. It then follows from the maximum principal that $\tilde L\frac {\mu_0}V\gneq0$ on $\partial D$.)
The same argument used to show \eqref{diff} shows that
\begin{equation}\label{diffagain}
\lim_{\delta\to0}\delta^{-\frac12}\int_Du_{\delta, V}(\tilde L\frac{\mu_0}V-w)\mu_0dx=0.
\end{equation}
In light of \eqref{reduction} and \eqref{diffagain},  it is enough to prove that
\begin{equation}\label{show2}
\lim_{\delta\to0}\delta^{-\frac12}\int_Du_{\delta,V}wdx=
\frac1{\sqrt2}\int_{\partial D}\sqrt{(n\cdot an)}
V^{-\frac32}\tilde L\mu_0 d\sigma.
\end{equation}
Using \eqref{u}, integrating by parts and using \eqref{auxagain}, we have
\begin{equation}\label{k=2}
\delta^{-\frac12}\int_Du_{\delta,V}wdx=\delta^{\frac12}\int_D (Lu_{\delta,V})\frac wVdx=
-\frac{\delta^\frac12}2\int_{\partial D}\frac1V(\tilde L\frac {\mu_0}V)a\nabla u_{\delta,V}\cdot nd\sigma,
\end{equation}
where we have  used the fact that
\begin{equation*}
\frac12\int_{\partial D}a\nabla( \frac w V)\cdot nd\sigma-\int_{\partial D}\frac wVb\cdot nd\sigma=
\int_D \tilde L\frac wVdx=0
\end{equation*}
by \eqref{auxagain}.
 Since $\mu_0$ and all its first order partial derivatives vanish on $\partial D$, we have $\tilde L\frac{\mu_0}V=\frac1V\tilde L\mu_0$ on $\partial D$.
Using this and  Lemma \ref{boundary}, and letting $\delta\to0$ in \eqref{k=2},  we obtain
\eqref{show2}.
The same technique is used repeatedly to handle larger values of even $k$, the smoothness requirements in the statement of
Theorem \ref{th} being the smoothness required to implement the integration by parts.
\hfill $\square$

\end{document}